\newtheorem{theorem}{Theorem}
\newtheorem{definition}[theorem]{Definition}
\newtheorem{question}[theorem]{Question}
\newtheorem{remark}[theorem]{Remark}
\newtheorem*{examples}{Examples}
\newtheorem*{example}{Example}
\newcommand{\GL}{{\rm GL}}
\newcommand{\SL}{{\rm SL}}
\newcommand{\SU}{{\rm SU}}
\newcommand{\SO}{{\rm SO}}
\newcommand{\Sp}{{\rm Sp}}
\newcommand{\Spin}{{\rm Spin}}
\newcommand{\PSL}{{\rm PSL}}
\newcommand{\Out}{{\rm Out}}
\newcommand{\Aut}{{\rm Aut}}
\newcommand{\Inn}{{\rm Inn}}
\newcommand{\Hom}{{\rm Hom}}
\newcommand{\Sym}{{\rm Sym}}
\newcommand{\Ext}{{\rm Ext}}
\newcommand{\p}{\mathsf{p}}
\newcommand{\Id}{{\rm Id}}
\newcommand{\Z}{\mathbb Z}
\newcommand{\F}{\mathbb{F}}
\newcommand{\R}{\mathbb{R}}
\newcommand{\C}{\mathbb{C}}
\newcommand{\Q}{\mathbb{Q}}
\newcommand{\G}{\mathbb{G}}
\title{Notes on Central Extensions}
\author{Dipendra Prasad }
\address{School of Mathematics, Tata Institute of Fundamental Research, Homi Bhabha Road, Colaba, Mumbai 400005, INDIA}
\email{dprasad@math.tifr.res.in}
\begin{document}
\maketitle

\begin{abstract}
These are the notes for some lectures given by this author at Harish-Chandra Research Institute, Allahabad in March 2014 for a workshop on Schur multipliers. The lectures aimed at giving an overview of the subject with emphasis on groups of Lie type over finite, real and $p$-adic fields. The author thanks Prof. Pooja Singla for the first draft of these notes and Shiv Prakash Patel for the second draft of these notes.
\end{abstract}
\section{Introduction}

\begin{definition}

Let $G$ be a group and $A$ an abelian group. A group $E$ is called a central extension of $G$ by $A$ if there is a short exact sequence
of groups,
\begin{equation}
1 \rightarrow A \xrightarrow{i} E \xrightarrow{\p} G \rightarrow 1  
\end{equation}
such that image of $i$ is contained in the center of $E$.
\end{definition}
Isomorphism classes of central extensions of $G$ by $A$ are parametrized by $H^{2}(G, A)$, where $A$ is considered to be a trivial $G$-module. Indeed, if $s : G \rightarrow E$ is a section of $\p$ then $\beta : G \times G \rightarrow A$ given by $\beta(g_1, g_2) := s(g_1)s(g_2)s(g_1 g_2)^{-1}$ defines a 2-cocycle on $G$ with values in $A$. On the other hand, if $\beta : G \times G \rightarrow A$ is a 2-cocycle on $G$ with values in $A$, then the binary operation on $E=G \times A$ defined by $(g_{1}, a_{1}) (g_{2}, a_{2}) := (g_{1} g_{2}, a_{1}a_{2} \beta(g_{1}, g_{2}))$ makes $E$ into a group, which is a central extension of $G$ by $A$.

A closely related group which comes up in the study of central extensions is the {\it Schur multiplier} of  a group $G$ defined to be $H_{2}(G, \Z)$. If $G$ is perfect, by the {\it universal coefficient theorem} recalled below, $H^{2}(G, \Q/\Z) \cong \Hom(H_{2}(G, \Z), \Q/\Z)$, i.e. Pontryagin dual of $H_{2}(G, \Z)$ classifies central extensions of $G$ by $\Q/\Z$ (if $G$ is perfect).
\begin{theorem}[Universal Coefficient theorem] 
Let $G$ be a group and $A$ an abelian group, considered as a trivial $G$-module. Then we have a short exact sequence of abelian groups as follows:
\begin{displaymath}
\xymatrix@1{ 0 \ar[r] & \Ext^{1}(H_{1}(G, \Z), A) \ar[r] & H^{2}(G, A) \ar[r] & \Hom( H_{2}(G, \Z), A)  \ar[r] & 0. }
\end{displaymath}
This sequence is split  (though not in a natural way).
\end{theorem}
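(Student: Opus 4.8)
\emph{Proof proposal.} The plan is to reduce the statement to the purely homological-algebra version of the universal coefficient theorem, applied to a well-chosen chain complex of free abelian groups. Fix a free resolution $P_\bullet \to \Z$ of the trivial $\Z[G]$-module $\Z$ by free $\Z[G]$-modules (the bar resolution, with $P_n = \Z[G^{n+1}]$, will do), and set $C_\bullet := P_\bullet \otimes_{\Z[G]} \Z$. Then by definition $H_n(G,\Z) = H_n(C_\bullet)$; each $C_n$ is a free abelian group, since a free $\Z[G]$-module becomes a free $\Z$-module after tensoring down; and --- this is exactly where the hypothesis that $A$ is a \emph{trivial} $G$-module enters --- there is a natural isomorphism $\Hom_{\Z[G]}(P_n, A) \cong \Hom_{\Z}(P_n \otimes_{\Z[G]} \Z, A) = \Hom_{\Z}(C_n, A)$, because a $\Z[G]$-linear map to a trivial module factors through the coinvariants. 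Hence $H^n(G,A) = H^n\big(\Hom_{\Z}(C_\bullet, A)\big)$, and it suffices to prove: for any chain complex $C_\bullet$ of free abelian groups and any abelian group $A$ there is a short exact sequence
\begin{displaymath}
0 \to \Ext^1\big(H_{n-1}(C_\bullet),A\big) \to H^n\big(\Hom_{\Z}(C_\bullet,A)\big) \to \Hom\big(H_n(C_\bullet),A\big) \to 0,
\end{displaymath}
split but not naturally, and then take $n=2$.

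For this algebraic statement, write $Z_n = \ker\partial_n$ and $B_n = \operatorname{im}\partial_{n+1}$, giving short exact sequences $0 \to Z_n \to C_n \to B_{n-1} \to 0$ and $0 \to B_n \to Z_n \to H_n(C_\bullet) \to 0$. A subgroup of a free abelian group is free, so $B_{n-1}$ is free, the first sequence splits, and we may fix a retraction $r_n : C_n \to Z_n$. Applying $\Hom(-,A)$ to the second sequence gives the long exact $\Ext$ sequence, which --- since $Z_{n-1}$ and $B_{n-1}$ are free, so their higher $\Ext$-groups vanish, and $\Ext^{\geq 2}$ over $\Z$ vanishes --- collapses to $0 \to \Hom(H_{n-1},A) \to \Hom(Z_{n-1},A) \to \Hom(B_{n-1},A) \to \Ext^1(H_{n-1},A) \to 0$. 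A diagram chase through $\Hom(C_\bullet,A)$ then identifies the surjection $H^n(\Hom(C_\bullet,A)) \twoheadrightarrow \Hom(H_n(C_\bullet),A)$ as ``restrict a cocycle $C_n \to A$ to $Z_n$ and observe that it annihilates $B_n$'', while its kernel is precisely the cokernel of $\Hom(Z_{n-1},A) \to \Hom(B_{n-1},A)$, i.e.\ $\Ext^1(H_{n-1}(C_\bullet),A)$. For the splitting, send $\psi \in \Hom(H_n(C_\bullet),A)$ to the class of the cocycle $C_n \xrightarrow{r_n} Z_n \twoheadrightarrow H_n(C_\bullet) \xrightarrow{\psi} A$; this is a cocycle because $r_n$ restricts to the identity on $Z_n \supseteq B_n$, and post-composing with the projection to $\Hom(H_n,A)$ recovers $\psi$. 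The construction depends on the choice of $r_n$, which is why the splitting is not natural.

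Specializing to $n=2$ gives $H_1(C_\bullet)=H_1(G,\Z)$, $H_2(C_\bullet)=H_2(G,\Z)$ and $H^2(\Hom_\Z(C_\bullet,A))=H^2(G,A)$, which is the asserted sequence. The two points that need care --- and which I would expect to be the only real obstacles --- are (i) the reduction $H^n(G,A)=H^n(\Hom_\Z(C_\bullet,A))$, which genuinely requires the triviality of $A$ and fails otherwise, and (ii) the degree shift in the connecting homomorphism, so that the kernel term really is $\Ext^1(H_{n-1},A)$ rather than $\Ext^1(H_n,A)$; everything else is a routine diagram chase. (One could alternatively extract the two edge terms from a universal-coefficient spectral sequence, but the explicit free-complex argument is the most economical route here.)
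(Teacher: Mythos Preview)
The paper does not actually prove this theorem: it states the Universal Coefficient theorem as a known result and immediately moves on to apply it in the two extreme cases $G^{ab}=1$ and $G=G^{ab}$. So there is no ``paper's own proof'' to compare against.

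That said, your argument is correct and is the standard one (essentially the proof in Brown's \emph{Cohomology of groups}, which the paper cites). The two steps you single out as the only genuine content are exactly right: the identification $\Hom_{\Z[G]}(P_n,A)\cong\Hom_\Z(C_n,A)$ really does use triviality of the $G$-action on $A$, and the degree shift producing $\Ext^1(H_{n-1},A)$ rather than $\Ext^1(H_n,A)$ is the one place where the diagram chase requires care. The splitting via a chosen retraction $r_n:C_n\to Z_n$ is also the usual construction, and your remark that its dependence on this choice is why naturality fails is correct --- indeed the paper later gives an explicit example (with $G=\Z/2+\Z/2$, $A=\Z/2$) showing the splitting cannot be made $\Aut(G)$-equivariant.
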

Let us consider the following two extreme cases of $G$. 
\begin{enumerate}
\item $G^{ab}  :=G/[G, G] = H_{1}(G, \Z)= 1$.
\item $G = G^{ab}$.
\end{enumerate}
Let us consider the first case, i.e. $G^{ab} = 1 $, in which case $H_{1}(G, \Z) = 1$. In this case, the universal coefficient theorem reduces to,
\begin{equation}
  H^{2}(G, A) \cong \Hom( H_{2}(G, \Z), A).
\end{equation}
In particular, there exists a central extension $\tilde{G}$ of $G$ by $A=H_{2}(G, \Z)$ corresponding to the identity, $\Id \in \Hom( H_{2}(G, \Z), H_{2}(G, \Z))$:
\begin{equation}
1 \rightarrow H_{2}(G, \Z) \rightarrow \tilde{G} \rightarrow G \rightarrow 1.  
\end{equation}
 This central extension $\tilde{G}$ of $G$ by $H_{2}(G, \Z)$ is universal in the sense that any central extension $E$ of $G$ by  $A$ is given by a push-out diagram for a group homomorphism $\varphi : H_{2}(G, \Z) \rightarrow A$ as follows:
 \begin{displaymath}
 \xymatrix{
 0 \ar[r] & H_{2}(G, \Z) \ar[r] \ar[d]_{\varphi} & \tilde{G} \ar[r] \ar[d] & G \ar[r] \ar[d] & 0 \\
 0 \ar[r] & A \ar[r] & E \ar[r] & G \ar[r] & 0,
 }
\end{displaymath}
where $E = [\tilde{G} \times A]/\Delta {H_{2}(G, \Z)} $. \\

In the second extreme case, we have $G = G^{ab}$, i.e. $G$ is abelian. For an abelian group $G$, we have 
\begin{equation} 
H_{2}(G, \Z) \cong \Lambda^{2}G := \dfrac{G \otimes G}{\{ g \otimes g : g \in G \}}.
\end{equation}

The universal coefficient theorem for an abelian group $G$ gives a split exact sequence as follows:
 \begin{displaymath} \tag{*} \label{*} 
\xymatrix@1{ 0 \ar[r] & \Ext^{1}(G, A) \ar[r] & H^{2}(G, A) \ar[r]  & \Hom( \Lambda^{2}G, A)  \ar[r] & 0. } 
 \end{displaymath}
 This exact sequence can be nicely interpreted in terms of extensions. Recall that $H^{2}(G, A)$ classifies central extensions $E$ of $G$ by $A$. Among these central extensions $E$, those which are abelian correspond to the subgroup $\Ext^{1}(G, A)$ 
of $H^{2}(G, A)$. The map from $H^{2}(G, A)$ to $\Hom(\Lambda^{2}G, A)$ is given by taking arbitrary lifts $g_{1}, g_{2}$ elements of $G$ to $E$, and taking their commutator in $E$ which is an element of $A$. This clearly gives a homomorphism from $\Lambda^{2}G$ to $A$. 
In particular, if $A$ is a divisible abelian group such as $A = \Q/\Z$, then since $\Ext^{1}(G, A) = 0$ (for $G$ abelian),  
\[
H^{2}(G, A) \cong \Hom(\Lambda^{2}G, A).
\]

The exact sequence (\ref{*}) is known to be split.  We show by 
an example that the exact sequence (\ref{*}) is not canonically split.
 For this, take $G=V=\Z/2+\Z/2$, and $A=\Z/2$. It is known that
$H^*(\Z/2,\Z/2)$ 
is the polynomial algebra on $H^1(\Z/2,\Z/2) \cong \Z/2$, and therefore by the Kunneth theorem, $H^*(\Z/2+ \Z/2,\Z/2) \cong \Sym^{*}[V^\vee] \cong  \Z/2[X,Y]$.
In particular,   $H^2(\Z/2+ \Z/2,\Z/2) \cong  \Sym^2[V^\vee]$.
Further, 
$\Ext^1(V,\Z/2) \cong V^\vee$. The exact sequence:

$$\xymatrix@1{ 0 \ar[r] & \Ext^{1}(G, A) \ar[r] & H^{2}(G, A) \ar[r]  & \Hom( \Lambda^{2}G, A)  \ar[r] & 0, } $$
 becomes,
$$\xymatrix@1{ 0 \ar[r] & V^\vee \ar[r] & \Sym^2[V^\vee] \ar[r]  & \Z/2  \ar[r] & 0, } $$
where the map from $V^\vee$ to $\Sym^2[V^\vee]$ is given by $v\rightarrow v\otimes v$. It is known that for $\Aut(G) = \Aut(V) = \GL_2(\Z/2)$, the above 
is a nonsplit exact sequence of $\Aut(V)$-modules.

\section{The Dual point of view}
Recall that a central extension of a group $G$ by an abelian group $A$ is another group $E$ such that $A$ is contained in the center of $E$ and $E/A \cong G$, i.e. we have an exact sequence of groups
\[
1 \rightarrow A \rightarrow E \rightarrow G \rightarrow 1.
\] 
A common theme in categorical mathematics is that many notions in an abstract category remain meaningful by reversing arrows! Reversing arrows in the above exact sequence, we get:
\[
1 \rightarrow G \rightarrow E \rightarrow A \rightarrow 1,
\]
i.e., now $G$ sits as a normal subgroup inside a group $E$ with quotient $A$, which we don't necessarily assume to be abelian, 
and then change notation from $A$ to $Q$.

We ask the following question from a dual point of view to the central extension.
\begin{question}
For a group $G$ and another group $Q$, what are the way in which $G$ sits in a group $E$ as a normal subgroup with quotient $E/G \cong Q$? In other words, what are the isomorphism classes of extension of groups $E$ which give rise to a short exact sequence of groups as follows:
\[
1 \rightarrow G \rightarrow E \rightarrow Q \rightarrow 1.
\]
\end{question}
Note that given a normal subgroup $G$ of a group $E$ with quotient $Q$, there exists a natural homomorphism $\phi : Q \rightarrow \Out(G)$, where $\Out(G)$ is the group of outer automorphism of $G$ which is defined as $\Aut(G)/\Inn(G)$, where $\Aut(G)$ is the group of all automorphisms of $G$ and $\Inn(G)$ 
is the normal subgroup of the group of inner automorphisms of $G$. The mapping $\phi : Q \rightarrow \Out(G)$ is defined by choosing an arbitrary lift $\tilde{a}$ of $a \in Q$ in $E$, and using the automorphism of $G$ given by conjugation by $\tilde{a}$, the automorphism of $G$ considered as an element of $\Out(G)$ being independent of the choice of the lift $\tilde{a}$ of $a$.\\

The group  $\Out(G)$ is an important invariant of a group $G$, which has been much studied in all branches of mathematics where groups play a role. \\

Let $C$ be the center of $G$. An element  $\xi \in H^{2}(Q, C)$ corresponds to a central extension
\[
0 \rightarrow C \rightarrow E \rightarrow Q \rightarrow 0.
\]
This gives rise to a push-out diagram:
\begin{displaymath}
\xymatrix{
0 \ar[r] & C \ar[r] \ar[d] & E \ar[r] \ar[d] & Q \ar[r] \ar[d] & 0 \\
0 \ar[r] & G \ar[r] & \tilde{E} \ar[r] & Q \ar[r] & 0, }
\end{displaymath}
where $\tilde{E} := [E \times G]/\Delta(C)$. Thus we have a natural map 
\begin{equation}
H^{2}(Q, C) \rightarrow \Ext^{1}(Q, G).
\end{equation}
As described above, $ 0 \rightarrow G \rightarrow \tilde{E} \xrightarrow{\p} Q \rightarrow 0$ gives rise to a natural map,
\begin{equation} \label{ext 2 hom}
\Ext^{1}(Q, G) \rightarrow \Hom(Q, \Out(G)).
\end{equation} 

Given a homomorphism $\varphi : Q \rightarrow \Out(G)$, we have a pull-back diagram of the natural exact sequence
\[
1 \rightarrow G/C \rightarrow \Aut(G) \rightarrow \Out(G) \rightarrow 1,
\] given by
\begin{displaymath}
\xymatrix{
1 \ar[r] & G/C \ar[r] \ar@{=}[d] & E=\Aut(G) \times_{\Out(G)} Q \ar[r] \ar[d]^{\varphi} & Q \ar[r] \ar[d] & 1 \\
1 \ar[r] & G/C \ar[r] & \Aut(G) \ar[r] & \Out(G) \ar[r] & 1.
}
\end{displaymath}
This gives us a morphism of sets $\Hom(Q, \Out(G)) \rightarrow \Ext^{1}(Q, G/C)$.

Associated to the short exact sequence 
\[
1 \rightarrow C \rightarrow G \rightarrow G/C \rightarrow 1,
\]
we have an exact sequence of pointed sets 
\begin{displaymath}
\xymatrix{ 0 \ar[r] & \Ext^{1}(Q, C) \ar[r] \ar@{=}[d] & \Ext^{1}(Q, G) \ar[r] \ar[rd] & \Ext^{1}(Q, G/C) \ar[r] & \cdots \\
& H^{2}(Q, C) & & \Hom(Q, \Out(G)) \ar[u] & }
\end{displaymath}
giving rise to
\[
0 \rightarrow H^{2}(Q,C) \rightarrow \Ext^{1}(Q,G) \rightarrow \Hom(Q, \Out(G)).
\]
The following theorem summarizes the above discussion.
\begin{theorem}
Let $C$ be the center of a group $G$. Then for any group $Q$, we have an exact sequence of pointed sets:
\[
0 \rightarrow H^{2}(Q, C) \rightarrow \Ext^{1}(Q, G) \rightarrow \Hom(Q, \Out(G)).
\] 
making $\Ext^1(Q, G)$ into a principal homogeneous space with fibres abelian groups $H^{2}(Q, C)$ and base which is the subset of 
$\Hom(Q, \Out(G))$ consisting of those homomorphisms from $Q$ to $\Out(G)$  which is realized by an extension of $Q$ by $G$.

If the center of $G$ is trivial, then
\[
\Ext^{1}(Q, G) \cong \Hom(Q, \Out(G)).
\]
\end{theorem}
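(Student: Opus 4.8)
The four maps entering the statement have already been constructed in the discussion above, so the proof consists of three verifications: that the ``conjugation'' map $\Ext^{1}(Q,G)\to\Hom(Q,\Out(G))$ is well defined on isomorphism classes and is a morphism of pointed sets; that $H^{2}(Q,C)\to\Ext^{1}(Q,G)$ is injective; and that every non-empty fibre of the conjugation map is a principal homogeneous space under $H^{2}(Q,C)$. My plan is to run all three through the language of factor systems. An extension $1\to G\to E\to Q\to 1$ is recorded, after choosing a set-theoretic section $s\colon Q\to E$ with $s(1)=1$, by the pair consisting of $\psi\colon Q\to\Aut(G)$, where $\psi(q)$ is conjugation by $s(q)$, and $f\colon Q\times Q\to G$, $f(q_{1},q_{2})=s(q_{1})s(q_{2})s(q_{1}q_{2})^{-1}$, subject to $\psi(q_{1})\psi(q_{2})=\Inn(f(q_{1},q_{2}))\,\psi(q_{1}q_{2})$ and the twisted cocycle identity for $f$; replacing $s$ by $h\cdot s$ with $h\colon Q\to G$, $h(1)=1$, changes the pair in the evident way, and equivalence classes of such pairs are exactly $\Ext^{1}(Q,G)$.

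For the first verification, the composite $Q\xrightarrow{\psi}\Aut(G)\to\Out(G)$ is a homomorphism by the first identity, and it is unchanged when $s$ is replaced by $h\cdot s$ since $\psi(q)$ then changes only by the inner automorphism $\Inn(h(q))$; functoriality in $E$ is clear, and the direct product $G\times Q$ maps to the trivial homomorphism, so the conjugation map is a morphism of pointed sets. I would then record its compatibility with the push-out map $\Ext^{1}(Q,G)\to\Ext^{1}(Q,G/C)$: pushing $E$ out along $G\to G/C\cong\Inn(G)$ yields the extension of $Q$ by $G/C$ obtained by pulling back $1\to G/C\to\Aut(G)\to\Out(G)\to 1$ along $\varphi_{E}$, so the fibre of the conjugation map over a homomorphism $\varphi$ coincides with the fibre of the push-out map over the point $\Aut(G)\times_{\Out(G)}Q$. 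This is the exactness, at $\Ext^{1}(Q,G/C)$, of the sequence of pointed sets attached to $1\to C\to G\to G/C\to 1$, and it identifies the base of the fibration as the set of homomorphisms realized by an extension.

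For injectivity and the torsor structure, fix a homomorphism $\varphi$ realized by an extension $E_{0}$ with pair $(\psi,f_{0})$, and let $Q$ act on $C=Z(G)$ through $\varphi$ followed by the natural action of $\Out(G)$ on $Z(G)$. Given a $2$-cocycle $c\colon Q\times Q\to C$ for this action, the pair $(\psi,f_{0}\cdot c)$ again satisfies both identities — the first because $c$ is central, the second because the defect of the cocycle identity for $f_{0}\cdot c$ is exactly $\partial c$ — so it defines an extension $c\cdot E_{0}$ still realizing $\varphi$; cohomologous cocycles give equivalent extensions, so $H^{2}(Q,C)$ acts on the fibre over $\varphi$. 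The action is free: if $(\psi,f_{0})$ and $(\psi,f_{0}\cdot c)$ are equivalent via $h\colon Q\to G$, comparing the two multiplications forces $\psi(q)=\Inn(h(q))\psi(q)$, hence $h$ takes values in $C$, and the remaining relation becomes $c=\partial h$. Taking $\varphi$ trivial and $E_{0}=G\times Q$ gives the injectivity of $H^{2}(Q,C)\to\Ext^{1}(Q,G)$, and transitivity in this case identifies the image of $H^{2}(Q,C)$ with the fibre over the trivial homomorphism, which is the exactness at $\Ext^{1}(Q,G)$. The action is transitive in general: if $E_{0}$ and $E_{1}$ both realize $\varphi$, one can choose their sections so that the two maps $\psi$ coincide (possible because both induce $\varphi$), and then $f_{1}\cdot f_{0}^{-1}$ is $C$-valued by the first identity and a $2$-cocycle by the second, with class $\xi$ satisfying $\xi\cdot E_{0}\cong E_{1}$. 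This proves the first assertion. For the second, if $Z(G)=1$ then $C=1$, so $H^{2}(Q,C)=0$ and the fibres are singletons, while every $\varphi\colon Q\to\Out(G)$ is realized by pulling back the extension $1\to G\to\Aut(G)\to\Out(G)\to 1$ along $\varphi$; hence the conjugation map is a bijection.

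I expect the transitivity step to be the main obstacle: putting two extensions realizing the same $\varphi$ into normal form with a common $\psi$, and checking that the resulting $C$-valued function is a genuine $2$-cocycle whose class is independent of the choices, is where the bookkeeping concentrates. One must also be careful that $C$ is being treated as a $Q$-module via $\varphi$ — the action is trivial only on the fibre over the trivial homomorphism — and that the freeness computation genuinely produces a coboundary rather than merely a $C$-valued cochain.
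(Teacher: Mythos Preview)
Your proposal is correct, and considerably more complete than what the paper actually does. The paper's proof consists of a single paragraph that explicitly addresses only the last assertion (the case $C=\{e\}$): it observes that when the center is trivial the pull-back construction $\Hom(Q,\Out(G))\to\Ext^{1}(Q,G/C)$ already lands in $\Ext^{1}(Q,G)$, and simply asserts that this map is inverse to the conjugation map. Everything else --- the exactness, the torsor structure --- is left implicit in the discussion preceding the theorem, which sets up the maps via push-out and pull-back and invokes an ``exact sequence of pointed sets'' attached to $1\to C\to G\to G/C\to 1$ without further justification.

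Your approach is genuinely different in style: rather than the paper's conceptual push-out/pull-back framing, you work entirely in factor systems $(\psi,f)$ and verify each claim by direct manipulation. What this buys you is a self-contained argument --- in particular you actually prove freeness and transitivity of the $H^{2}(Q,C)$-action on fibres, which the paper does not spell out. You also correctly flag that the $Q$-action on $C$ depends on the basepoint $\varphi$, a subtlety the paper's phrasing (``fibres abelian groups $H^{2}(Q,C)$'') glosses over. The paper's framing is cleaner for seeing where the maps come from and why the sequence should be exact, but your factor-system computation is what one would need to fill in the details honestly; your anticipated difficulty in the transitivity step (aligning the two $\psi$'s and checking cocycle independence) is exactly the content that the paper omits.
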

\begin{proof} We only make some remarks on the last assertion in the theorem. If $C= \{ e \}$,
then the morphism of sets $\Hom(Q, \Out(G)) \rightarrow \Ext^{1}(Q, G/C)$,
 gives rise to a morphism of sets 
\[
\Hom(Q, \Out(G)) \rightarrow \Ext^{1}(Q, G).
\]
This can be easily seen to be inverse of the morphism of sets in (\ref{ext 2 hom}) and therefore if $C = \{ e \}$, we have an isomorphism of sets
\[
\Ext^{1}(Q, G) \cong \Hom(Q, \Out(G)).
\]
\end{proof}

\begin{example} \normalfont
As an example, we consider possible extensions of $\Z/2\Z$ by $\GL_{n}(\C)$
\[
1 \rightarrow \GL_{n}(\C) \rightarrow E \rightarrow \{\pm1\} \rightarrow 1,
\]
such that $-1$ acts on $\GL_{n}(\C)$ via the outer automorphism given by $g \mapsto ^{t}g^{-1}$. We note that the outer automorphism group of $\GL_{n}(\C)$ is isomorphic to $\Z/2$ generated by $g \mapsto ^{t}g^{-1}$. By the above theorem, the number of extensions corresponding to this outer automorphism of $\GL_{n}(\C)$ is exactly the number of elements in $H^{2}(\{\pm1\}, Z(\GL_{n}(\C))) = H^{2}(\{\pm1\}, \C^{\times})$, where $\{\pm1\}$ acts on $\C^{\times}$ by $x \mapsto x^{-1}$. Since the 
cohomology of a finite cyclic group is periodic with period 2, the 2nd cohomology reduces to the 0th (Tate) cohomology, which is easily seen to be $\Z/2$ in this case. This means that there are exactly two extensions of $\Z/2$ by $\GL_n(\C)$, one of which is the trivial one, i.e., a semi-direct product, and the other one given by 
generators and relations  as $\{\GL_n(\C); j\}$ with $j^2=-1, jgj^{-1} = j_0^tg^{-1}j_0^{-1}$ with $j_0$ the anti-diagonal matrix with entries $(1,-1,1,-1,\cdots)$.
\end{example}

\section{Examples}
\begin{examples}[Examples of central extensions of certain abelian groups:] \normalfont
\noindent
\begin{enumerate}
\item Central extensions of $\Z/2\Z$ by $\Z/2\Z$: There are two extensions in this case. One of them is the trivial extension, i.e., the 
direct product, and  the other one is the non-trivial extension, 
\[
0 \rightarrow \Z/2\Z \rightarrow \Z/4\Z \rightarrow \Z/2\Z \rightarrow 0.
\]

\item Let $\F_{q}$ be the finite field with $q=p^n$-elements. There is a non-trivial central extension $H(\F_{q}^{2n})$ of $\F_{q}^{2n}$ by $\F_{q}$ for all positive integer $n$, called the Heisenberg group, i.e.  
\[
0 \rightarrow \F_{q} \rightarrow H(\F_{q}^{2n}) \rightarrow \F_{q}^{2n} \rightarrow 0.
\]The simplest realization of $H(\F_q^{2n})$ is the group of $(n+1) \times (n+1)$ upper triangular unipotent matrices with 
only non-diagonal nonzero entries  (from $\F_q$) in the first row and last column. 
\end{enumerate}
\end{examples}

\begin{example}[Central extension of  $\{ \pm 1 \}^{n-1}$]
\normalfont
Let ${\rm O}_{n}(\R)$ 
be defined as isometry group of the quadratic form given by $q(x) = x_{1}^{2} + \cdots + x_{n}^{2}$, and let $\SO_{n}(\R)= {\rm O}_{n}(\R) \cap \SL_{n}(\R)$.
Consider the $\{ \pm 1 \}^{n} \hookrightarrow {\rm O}_{n}(\R)$ as the group of diagonal matrices with entries $+1$ or $-1$. Identify $\{ \pm 1 \}^{n-1}$ to 
be the subgroup of $\{ \pm 1\}^{n}$ consisting of those elements of $\{ \pm 1 \}^{n}$ with even number of $-1$'s.  Define $F_{n-1}$ to be the 2-fold cover of $\{ \pm 1 \}^{n-1}$ which is obtained from the pull back of $\{ \pm 1 \}^{n-1} \hookrightarrow \SO_{n}(\R)$ through the spin cover 
$\Spin_{n}(\R) \rightarrow \SO_{n}(\R)$, i.e.
\begin{displaymath}
\xymatrix{
F_{n-1} \ar[r] \ar[d] & \Spin_{n}(\R) \ar[d] \\
\{ \pm 1 \}^{n-1} \ar[r] & \SO_{n}(\R). }
\end{displaymath}
Then $F_{n-1}$ is a non-trivial 2-fold cover of $\{ \pm 1 \}^{n-1}$. Let us describe the group $F_{n-1}$ more explicitly (via the usual Clifford 
algebra construction which we hide). \\
We will describe $F_{n-1}$ as a subgroup of another group $E_{n}$ defined below. As a set $E_{n}$ is 
\[
E_{n} = \{ \epsilon e_{A} \mid \epsilon \in \{ \pm 1 \}, A \subset \{ 1,\cdots, n \} \},
\]
where $\{ \pm 1 \}$ lies in the center of $E_{n}$ and
\[
F_{n-1} = \{ \epsilon e_{A} \mid \#(A) =\text{even } \}.
\]
For $A = \{ e_{i_1}, \cdots, e_{i_j} \}$ with $i_1 < \cdots < i_j$, we write $e_{A} := e_{i_1} \cdots e_{i_j}$ with convention that $e_{\emptyset} = 1$. The elements $e_{i}$'s satisfy the relation $e_{i}e_{j} = - e_{j}e_{i}$ for $i \neq j$ and $e_{i}^{2} = 1$. For $\epsilon_{1}, \epsilon_{2} \in \{ \pm 1 \}$ and $A, B \subset \{1,\cdots,n \}$, define  the multiplication as $$(\epsilon_1 e_A)(\epsilon_2 e_B) = \epsilon_{1} \epsilon_{2} \epsilon(A, B) e_{(A \cup B) - (A \cap B)},$$ 
where $\epsilon(A, B)$ is determined by the relations relations among $e_{i}$'s given above. 
\begin{theorem}
\noindent
\begin{enumerate}
\item[(a)] $E_{n}$ and $F_{n-1}$ define a non-trivial central extension of $\{ \pm 1 \}^{n}$ and $\{ \pm 1 \}^{n-1}$ by $\{ \pm 1 \}$ respectively. $F_{n-1}$ is isomorphic to the one obtained from 2-fold cover $\Spin(n) \rightarrow \SO(n)$.
\item[(b)] $[E_{n}, E_{n}] = \{ \pm 1 \}$.
\item[(c)] The center of $E_{n}$ is $\{ \pm 1 \}$ if $n$ is even, and $\{ \pm 1, \pm e_{1} \cdots e_{n} \}$ if $n$ is odd, whereas 
the center of $F_{n-1}$ is $\{ \pm 1 \}$ if $n$ is odd, and $\{ \pm 1, \pm e_{1} \cdots e_{n} \}$ if $n$ is even. 
\end{enumerate}
\end{theorem}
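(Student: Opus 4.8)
\emph{Proof plan.} The whole theorem reduces to recognizing $E_n$ inside a Clifford algebra and then doing explicit sign bookkeeping. First I would set up part (a). Let $q(x)=x_1^2+\cdots+x_n^2$ and let $\mathrm{Cl}(q)$ be its Clifford algebra, with orthonormal basis $e_1,\dots,e_n$ satisfying $e_i^2=1$ and $e_ie_j=-e_je_i$ for $i\neq j$; these are exactly the relations used to define the multiplication on $E_n$. Since $e_Ae_B=\pm e_{A\triangle B}$ and $e_A^{-1}=\pm e_A$ (as $e_A^2=\pm 1$), the set $\{\pm e_A: A\subseteq\{1,\dots,n\}\}$ is a subgroup of $\mathrm{Cl}(q)^\times$, so the stated multiplication genuinely makes $E_n$ a group and $F_{n-1}$ (the even-cardinality part) a subgroup. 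The map $\p:E_n\to\{\pm1\}^n$ sending $\epsilon e_A$ to the tuple with $-1$ in slot $i$ exactly when $i\in A$ is a surjective homomorphism (symmetric difference $\leftrightarrow$ coordinatewise product) with kernel $\{\pm1\}$, which is central by construction; restricting to even $A$ gives $1\to\{\pm1\}\to F_{n-1}\to\{\pm1\}^{n-1}\to 1$. For $n\geq 2$ one has $(e_1e_2)^2=-1$, so $E_n$ and $F_{n-1}$ contain an element of order $4$ and hence are not elementary abelian $2$-groups; therefore neither extension splits, i.e.\ both are non-trivial.

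For the identification of $F_{n-1}$ with the spin pull-back I would invoke the standard (twisted adjoint) vector representation $\Spin_n(\R)\subset\mathrm{Cl}^0(q)\to\SO_n(\R)$ that the text suppresses: under it the element $e_ie_j$ ($i\neq j$) maps to the rotation acting by $-1$ on the plane $\langle e_i,e_j\rangle$ and by $+1$ on its orthogonal complement, i.e.\ to the diagonal matrix with $-1$'s precisely in positions $i$ and $j$. These matrices generate exactly the subgroup $\{\pm1\}^{n-1}\subset\SO_n(\R)$ of diagonal sign matrices of determinant $1$, and $\ker(\Spin_n(\R)\to\SO_n(\R))=\{\pm1\}$. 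Hence $F_{n-1}$, which is generated by $\{\pm1\}$ and the $e_ie_j$, is precisely the preimage of $\{\pm1\}^{n-1}$ in $\Spin_n(\R)$; that is the pull-back square in the statement, which finishes (a).

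Part (b) is immediate: $E_n/\{\pm1\}\cong\{\pm1\}^n$ is abelian, so $[E_n,E_n]\subseteq\{\pm1\}$, while $e_i^{-1}=e_i$ gives $[e_1,e_2]=e_1e_2e_1e_2=-e_1^2e_2^2=-1$ for $n\geq 2$, whence $[E_n,E_n]=\{\pm1\}$.

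For part (c), since $\pm1$ is central, $\epsilon e_A$ is central in $E_n$ iff $e_A$ commutes with every generator $e_i$, $1\le i\le n$. Counting transpositions gives $e_Ae_i=(-1)^{|A|-[i\in A]}\,e_ie_A$, so $e_A$ commutes with $e_i$ iff $|A|\equiv[i\in A]\pmod 2$. Demanding this for all $i$ forces $A=\emptyset$ (always central) or $A=\{1,\dots,n\}$ (central iff $n$ is odd), which gives the asserted $Z(E_n)$. For $F_{n-1}$ I would run the same argument against the generators $e_ie_j$: writing $\sigma_i(A)=|A|-[i\in A]\bmod 2$ one gets $e_A(e_ie_j)=(-1)^{\sigma_i(A)+\sigma_j(A)}(e_ie_j)e_A$, so $e_A$ is central iff $[i\in A]\equiv[j\in A]$ for all $i\neq j$, again forcing $A\in\{\emptyset,\{1,\dots,n\}\}$; but $e_1\cdots e_n$ lies in $F_{n-1}$ only when $n$ is even, yielding $Z(F_{n-1})=\{\pm1\}$ for $n$ odd and $\{\pm1,\pm e_1\cdots e_n\}$ for $n$ even. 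The only genuinely non-elementary ingredient is the Clifford-algebra description of $\Spin_n$ needed in paragraph two; once the dictionary $e_ie_j\mapsto$ (diagonal rotation) is granted, everything else is routine sign counting.
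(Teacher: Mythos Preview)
Your proof is correct and complete; the paper itself states this theorem without proof, merely alluding to ``the usual Clifford algebra construction which we hide,'' which is exactly what you have made explicit. Your sign-counting arguments for (b) and (c) and the order-$4$ argument for non-triviality in (a) are the natural ones, and your identification of $e_ie_j$ with the diagonal reflection in $\SO_n(\R)$ via the vector representation of $\Spin_n$ is precisely the suppressed Clifford-algebra step the paper refers to.
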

\end{example}

\begin{remark}The group $E_n$ for $n$ even, and $F_{n-1}$ for $n$ odd is what's called an extra special 2 group. They have a unique 
irreducible representation of dimension $> 1$, which is equal to $2^{[n/2]}$ where $[n/2]$ refers to the integral part of $n/2$.
\end{remark}

\begin{example}[Central extensions of alternating groups] 
\normalfont
Let $S_{n}$ be the symmetric group on a finite set of $n$ elements. Let $A_{n} \subset S_{n}$ be the subgroup of even permutations. It is known that $H_{2}(A_{n}, \Z) = \Z/ 2\Z$ if $n=5$ or $n>7$; $H_{2}(A_{6}, \Z) \cong H_{2}(A_{7}, Z) \cong \Z/6\Z$. In particular, $A_{n}$ for $n \geq 5$ have a unique 2-fold cover $\tilde{A}_{n}$. We construct $\tilde{A}_{n}$ using the spin cover of $\SO(n)$ below.\\
Let $\{e_{1}, \cdots, e_{n}\}$ be an orthonormal basis of an $n$-dimensional quadratic space over $\R$.  We identify $S_{n}$ with the group of matrices, which permute the basis vectors $\{ e_{1}, \cdots, e_{n} \}$. This gives rise to an embedding $S_{n} \hookrightarrow O_{n}(\R)$ and hence $A_{n} \hookrightarrow \SO_{n}(\R)$. Define $\tilde{A}_{n}$ to be the 2-fold cover of $A_{n}$ which is obtained from the pull-back of the $A_{n} \rightarrow \SO_{n}(\R)$ and $\Spin_{n}(\R) \rightarrow \SO_{n}(\R)$, i.e. 
\begin{displaymath}
\xymatrix{
\tilde{A}_{n} \ar[r] \ar[d] & \Spin_{n}(\R) \ar[d] \\
A_{n} \ar[r] & \SO_{n}(\R). }
\end{displaymath}
Then $\tilde{A}_{n}$ is a non-trivial 2-fold cover of $A_{n}$.

Let us directly describe the 2-fold cover of $A_{n}$, which arises in the above fashion. We will define a two fold cover of $S_{n}$ such that the 2-fold cover of $A_{n}$ is obtained from that of restriction to $A_{n}$. The group $S_{n}$ has a presentation on $n-1$ generators, say $t_{1}, \cdots, t_{n-1}$ with the following relations:
\begin{enumerate}
\item[(a)] $t_{i}^{2} = 1$ for $1 \leq i \leq n-1$.
\item[(b)] $t_{i+1}t_{i}t_{i+1} = t_{i} t_{i+1} t_{i}$ for $1 \leq i \leq n-2$.
\item[(c)] $t_{j} t_{i} = t_{i} t_{j}$ for $|i - j | > 1$.
\end{enumerate}
We use these relations to describe a two fold cover $\tilde{S}_{n}$ of $S_{n}$. The group $\tilde{S}_{n}$ has generators $z, t_{1}, \cdots, t_{n}$ with the following relation:
\begin{enumerate}
\item[(a)] $z^{2} = 1$.
\item[(b)] $t_{i} t_{i} = z$ for $1 \leq i \leq n-1$.
\item[(c)] $t_{i+1}t_{i}t_{i+1} = t_{i} t_{i+1} t_{i}$ for $1 \leq i \leq n-2$.
\item[(d)] $t_{j} t_{i} = t_{i} t_{j}$ for $|i - j | > 1$.
\end{enumerate}
\begin{theorem}
Let $\tilde{A}_{n}$ be the 2-fold cover of $A_{n}$ which is the restriction of the above defined cover $\tilde{S}_{n}$ of $S_{n}$.
\begin{enumerate}
\item The $\tilde{A}_{n}$ is a non-trivial 2-fold cover of $A_{n}$ if and only if $n \geq 4$.
\item For $n \geq 4$, up to an isomorphism, $\tilde{A}_{n}$ is the only non-trivial 2-fold cover of $A_{n}$, which is isomorphic to the 2-fold cover of $A_{n}$ obtained from the pull back of 2-fold cover $\Spin_{n}(\R) \rightarrow \SO_{n}(\R)$.
\end{enumerate}
\end{theorem}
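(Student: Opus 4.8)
The plan is to reduce both assertions to the classical computation of the Schur multiplier of $A_n$ together with a single explicit non-splitting check. Since the kernel of any $2$-fold cover is central of order $2$, the isomorphism classes of $2$-fold covers of $A_n$ are classified by $H^2(A_n,\Z/2)$, and the universal coefficient theorem recalled above gives the split exact sequence
\[
0\to \Ext^1\!\big(H_1(A_n,\Z),\Z/2\big)\to H^2(A_n,\Z/2)\to \Hom\!\big(H_2(A_n,\Z),\Z/2\big)\to 0 .
\]
For $n\le 3$ the group $A_n$ is $\{e\}$, $\{e\}$ or $\Z/3$, so $H^2(A_n,\Z/2)=0$ and every $2$-fold cover of $A_n$ is split; in particular $\tilde A_1=\tilde A_2=\langle z\rangle$ is the trivial cover of the trivial group, and $\tilde A_3$ is split — one can also see the last point directly, since in $\tilde S_3$ the braid relation together with $t_i^2=z$ forces $(t_1t_2)^3=z$, so the lift $t_1t_2$ of the $3$-cycle $(123)$ has order $6$, whence $\tilde A_3\cong\Z/6\cong\Z/3\times\Z/2$. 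For $n\ge 4$, on the other hand, $H_1(A_n,\Z)$ is $0$ when $n\ge 5$ (as $A_n$ is then perfect) and $\Z/3$ when $n=4$, so the $\Ext^1$ term vanishes; and $\Hom(H_2(A_n,\Z),\Z/2)\cong\Z/2$ in every case, because $H_2(A_n,\Z)$ is $\Z/2$ for $n=4,5$ and $n\ge 8$, and $\Z/6$ for $n=6,7$. Hence $H^2(A_n,\Z/2)\cong\Z/2$ for all $n\ge 4$: up to isomorphism there is exactly one non-trivial $2$-fold cover of $A_n$. This already gives the ``only if'' part of~(1), and reduces the rest of~(1) and all of~(2) to proving that $\tilde A_n$ and the $\Spin_n(\R)$-pull-back are both non-split when $n\ge 4$ (they are then automatically isomorphic).

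Non-triviality of the $\Spin_n(\R)$-pull-back I would establish by hand. Under $A_n\hookrightarrow\SO_n(\R)$ the double transposition $(12)(34)$ acts with eigenvalues $-1,-1,1,\dots,1$, i.e.\ it is the rotation by the angle $\pi$ in the $2$-plane spanned by $e_1-e_2$ and $e_3-e_4$; if $\{f_1,f_2\}$ is an orthonormal basis of that plane, its two preimages in $\Spin_n(\R)$ are $\pm f_1f_2$, and $(f_1f_2)^2=-f_1^2f_2^2=-1$ is the non-trivial element of the kernel. So each lift of $(12)(34)$ has order $4$, the pull-back cover is already non-split over the subgroup $\langle(12)(34)\rangle\cong\Z/2$, and therefore it is non-trivial for every $n\ge 4$; with slightly more care the preimage of the Klein four subgroup $\langle(12)(34),(13)(24)\rangle\le A_4\le A_n$ turns out to be the quaternion group $Q_8$. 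By the uniqueness established in the first paragraph, this pull-back is \emph{the} non-trivial $2$-fold cover of $A_n$.

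Finally I would identify $\tilde A_n$ with this pull-back. For that I would construct a homomorphism from $\tilde S_n$ into the double cover of $\mathrm{O}_n(\R)$ that lives inside the Clifford algebra of $(\R^n,x_1^2+\cdots+x_n^2)$, sending $z$ to $-1$ and each generator $t_i$ to the spinorial lift of the reflection realizing the transposition $(i,i+1)$ (a suitable unit multiple of $e_i-e_{i+1}$). Composing with the projection to $\mathrm{O}_n(\R)$ recovers $S_n\hookrightarrow\mathrm{O}_n(\R)$, so restricting to $\tilde A_n=\p^{-1}(A_n)$ one obtains a morphism of central extensions of $A_n$ by $\Z/2$ covering the identity of $A_n$, hence an isomorphism of $\tilde A_n$ onto the preimage of $A_n$ in $\Spin_n(\R)$, i.e.\ onto the pull-back cover. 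In particular $z\ne 1$ in $\tilde S_n$, so $\tilde S_n$ and $\tilde A_n$ really are $2$-fold covers, and $\tilde A_n$ is non-trivial for $n\ge 4$. Together with the two preceding paragraphs this proves both parts.

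The step I expect to be the real obstacle is exactly this last construction: verifying that $t_i\mapsto e_i-e_{i+1}$ (up to the correct scalar and sign) actually respects \emph{every} defining relation (a)--(d) of $\tilde S_n$ — this is where the signs $\epsilon(A,B)$ of Clifford multiplication have to be matched against the signs built into the presentation — and, equivalently, ruling out the a priori possibility that the presentation of $\tilde S_n$ collapses with $z=1$. Everything else (the eigenvalue computation for $(12)(34)$, the small-$n$ computations, the identification of the preimage of a Klein four subgroup with $Q_8$) is routine bookkeeping.
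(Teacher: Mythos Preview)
The paper does not prove this theorem; it is simply stated inside the example. Your strategy --- compute $H^2(A_n,\Z/2)$ from the known Schur multipliers via the universal coefficient theorem, check the Spin pull-back is non-split by lifting $(12)(34)$ to an element of order $4$, and then identify $\tilde A_n$ with it by mapping $\tilde S_n$ into the Clifford group --- is the standard route and is correct in outline.

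Your caution about the last step is well placed, and in fact it exposes a genuine problem, though one that lies in the paper's presentation rather than in your argument. With relation (d) exactly as written, $t_it_j=t_jt_i$ for $|i-j|>1$, the Clifford map you propose cannot exist: the elements $\tfrac{1}{\sqrt 2}(e_i-e_{i+1})$ and $\tfrac{1}{\sqrt 2}(e_j-e_{j+1})$ \emph{anti}commute when $|i-j|>1$. More decisively, from (b) and (d) one gets $(t_1t_3)^2=t_1^2t_3^2=z^2=1$, so $(12)(34)$ acquires a lift of order $2$, contradicting your Spin computation; and abelianising the presentation gives $\tilde S_n^{\,ab}\cong\Z/4$ with $z\mapsto 2$, so $z\notin[\tilde S_n,\tilde S_n]$, the commutator subgroup maps isomorphically onto $A_n$, and $\tilde A_n\cong A_n\times\Z/2$ is the \emph{trivial} cover for every $n\ge 4$. (Concretely, the fibre product $\{(a,\sigma)\in\Z/4\times S_n:a\equiv\mathrm{sgn}(\sigma)\pmod 2\}$ with $t_i=(1,(i\,i{+}1))$, $z=(2,1)$ realises the paper's relations.) The intended relation is Schur's $t_jt_i=z\,t_it_j$ for $|i-j|>1$; with that correction your Clifford construction goes through, the preimage of the Klein four subgroup is $Q_8$ as you predict, and the rest of your proof stands.
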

\end{example}

\section{Real groups}

For $G$ a  semisimple real group, we randomly pick a few examples.
\begin{enumerate}
\item $\SU(n)$ is simply connected.
\item Simply connected cover of $\SO(n)$ is $\Spin(n)$, which is a 2-fold cover of $\SO(n)$, i.e. we have
\[
0 \rightarrow \Z/2\Z \rightarrow \Spin(n) \rightarrow \SO(n) \rightarrow 1.
\]
\item $\pi_{1}(\SL_{n}(\R)) = \Z/2$ if $n>2$, and equals $\Z$ for $n=2$. 

\item 
$\pi_1(\Spin(p, q)) = 1, \Z, \Z/2$ depending on whether $\min(p,q) \leq 1$, $\min(p,q)=2$, $\min(p,q)>2$.

\end{enumerate}

\begin{remark}Information on the (topological) fundamental group of a semi-simple simply connected real algebraic group $G(\R)$ seems not to be 
clearly spelled out in the literature.  It is known that the only options for $\pi_1(G(\R)) = \pi_1(K)$ for $K$ a maximal compact subgroup of $G(\R)$,
 are $1, \Z/2,\Z$, and the only cases when $\pi_1(K) = \Z$ are the Hermitian symmetric cases. 
The only cases in which the fundamental group is trivial
is for $\SU^*(2n), \Sp(p,q), \Spin(n,1)$, the non-split inner form of $E_6$, and the rank 1 form of $F_4$. 
It is known that real forms $G'$ of a group $G$ are in 
bijective correspondence with conjugacy classes of involutive automorphisms $\sigma$ on the compact real form $K$ of $G$, such that the 
maximal compact subgroup of $G'$ is $K'=K^\sigma$. Since $K'$ has the same fundamental group as $G'$, the question on $\pi_1(G')$ amounts to a 
question in algebraic groups: for an involution on a simple simply connected algebraic group, when is the fixed points semi-simple, 
and when it is semi-simple and simply connected.
\end{remark}

\section{Two questions}
We pose two questions here which we are not sure are already answered in existing literature!
\begin{question}
Let $G$ be any finite or compact connected Lie group. Is any two fold cover of $G$ obtained from the pull-back of $\Spin(n)$, the two fold spin cover of $\SO(n)$, through a map $\varphi : G \rightarrow \SO(n)$?
\end{question}
\begin{question}
Is $H^{*}(G, \Z/2\Z)$ generated as an algebra by $\omega_{i}(\rho)$, as $\rho : G \rightarrow {\rm O}(n)$ varies over {\it all} orthogonal representations of $G$ and $\omega_{i}(\rho)$'s are Stiefel-Whitney classes of orthogonal representations $\rho$?
\end{question} 
\begin{remark} The question seems analogous to the  Hodge conjecture in Algebraic Geometry 
which is about generators of certain cohomologies by Chern classes of vector bundles. 
\end{remark}
\section{Relation of central extensions to $K_{2}$}
\begin{theorem}[Steinberg]
If $n \geq 3$,
then $H_{2}(\SL_{n}(F), \Z) \cong K_{2}(F)$ provided that we exclude $n=3$ and $|F| = 2$ or $4$, and $n=4$ and $|F|=2$.
\end{theorem}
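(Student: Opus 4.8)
The plan is to realize both sides of the claimed isomorphism through the \emph{Steinberg group}. For $n \geq 2$ let ${\rm St}_n(F)$ be the group generated by symbols $x_{ij}(t)$, $1 \leq i \neq j \leq n$, $t \in F$, subject to the Steinberg relations: $x_{ij}(s)x_{ij}(t) = x_{ij}(s+t)$; $[x_{ij}(s), x_{kl}(t)] = 1$ whenever $i \neq l$ and $j \neq k$; and $[x_{ij}(s), x_{jk}(t)] = x_{ik}(st)$ whenever $i \neq k$. Sending $x_{ij}(t)$ to the elementary matrix $e_{ij}(t) = I_n + tE_{ij}$ gives a homomorphism $\pi_n : {\rm St}_n(F) \to \SL_n(F)$, which is surjective because over a field $\SL_n(F)$ is generated by elementary matrices. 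Set $C_n(F) := \ker \pi_n$. The theorem will follow once I show, for $n \geq 3$ outside the listed exceptions, that (I) $C_n(F) \cong H_2(\SL_n(F), \Z)$ and (II) $C_n(F) \cong K_2(F)$.

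First I would establish that, for $n \geq 3$, the extension $1 \to C_n(F) \to {\rm St}_n(F) \xrightarrow{\pi_n} \SL_n(F) \to 1$ is a perfect central extension of a perfect group. Perfectness of both ${\rm St}_n(F)$ and $\SL_n(F)$ is immediate from the third relation $x_{ik}(st) = [x_{ij}(s), x_{jk}(t)]$, which becomes available precisely when a third index $j \notin \{i,k\}$ exists, i.e. when $n \geq 3$ (for $n=2$ this relation is vacuous, and $\SL_2(F)$ need not even be perfect). Centrality of $C_n(F)$ is a short commutator computation: writing each generator $x_{ij}(t)$ as $[x_{ik}(t), x_{kj}(1)]$ for some $k \notin \{i,j\}$ and using that elements of $C_n(F)$ are killed by $\pi_n$, one checks that $x_{ij}(t)$ commutes with $C_n(F)$, hence so does all of ${\rm St}_n(F)$.

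The heart of the matter, and the step I expect to be the main obstacle, is to prove that ${\rm St}_n(F)$ is the \emph{universal} central extension of $\SL_n(F)$; equivalently, that every central extension of ${\rm St}_n(F)$ splits, i.e. $H_2({\rm St}_n(F), \Z) = 0$. It is exactly here that the cases $n = 3$ with $|F| \in \{2,4\}$ and $n = 4$ with $|F| = 2$ must be excluded. One takes an arbitrary central extension $1 \to Z \to \widehat{{\rm St}} \to {\rm St}_n(F) \to 1$, lifts each $x_{ij}(t)$ to some $\widehat{x}_{ij}(t) \in \widehat{{\rm St}}$, and shows that the lifts can be renormalized inside their $Z$-cosets so that all the Steinberg relations hold on the nose, producing a splitting ${\rm St}_n(F) \to \widehat{{\rm St}}$. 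The renormalization exploits that (for $n \geq 3$) each $x_{ij}(t)$ is recovered as a bracket $[\widehat{x}_{ik}(t), \widehat{x}_{kj}(1)]$, and the key lemma is that this bracket is independent of the auxiliary index $k$; verifying this independence, together with additivity in $t$, is where the small $(n,|F|)$ genuinely fail. Granting this, ${\rm St}_n(F) \to \SL_n(F)$ is the universal central extension of the perfect group $\SL_n(F)$, so — as in the discussion of the case $G^{ab} = 1$ in the introduction — its kernel is the Schur multiplier: $C_n(F) \cong H_2(\SL_n(F), \Z)$, which is assertion (I).

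Finally, for (II), I would invoke Steinberg's stability. Recall $K_2(F) := \ker\big({\rm St}(F) \to {\rm E}(F)\big)$ with ${\rm St}(F) = \varinjlim_n {\rm St}_n(F)$ and ${\rm E}(F) = \varinjlim_n \SL_n(F) = \SL(F)$; so it suffices to show the stabilization maps $C_n(F) \to C_{n+1}(F)$ are isomorphisms for $n \geq 3$ (outside the exceptions), whence $C_n(F) \cong \varinjlim_m C_m(F) = K_2(F)$. Injectivity and surjectivity of these maps are proved by the same bookkeeping with the Steinberg relations, pushing relations from ${\rm St}_{n+1}(F)$ down into ${\rm St}_n(F)$ to see that no element of the larger kernel is new. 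To close, one should note that the excluded cases are genuinely exceptional: there $H_2(\SL_n(\F_q), \Z)$ is strictly larger than $K_2(\F_q)$, which vanishes for every finite field — for instance $\SL_3(\F_2) \cong \PSL_2(\F_7)$ and $\SL_4(\F_2) \cong A_8$ each have Schur multiplier $\Z/2$ — so the isomorphism genuinely fails and the hypotheses cannot be relaxed.
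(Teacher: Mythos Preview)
Your sketch is correct and follows the standard Milnor--Steinberg route: build ${\rm St}_n(F)$, prove it is the universal central extension of $\SL_n(F)$ for $n\geq 3$ (away from the listed exceptions), and then invoke stability $C_n(F)\cong C_{n+1}(F)$ to identify the kernel with the Milnor definition $K_2(F)=\ker({\rm St}(F)\to\SL(F))$.

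The paper's treatment (in the remark following the theorem) takes a genuinely different angle. Rather than working with the abstract Steinberg group and stability, it starts from an arbitrary central extension $1\to C\to E\to\SL_n(F)\to 1$, picks two commuting coroots $r_a,\tau_b$, and defines the symbol $\{a,b\}\in C$ as the commutator of arbitrary lifts. It then verifies bimultiplicativity and the Steinberg symbol identities $\{a,b\}\{b,a\}=1$, $\{a,-a\}=1$, $\{a,1-a\}=1$ directly (citing Milnor for the delicate ones), thereby producing a homomorphism from the \emph{Matsumoto presentation} $K_2(F)=F^\times\otimes F^\times/\langle a\otimes(1-a)\rangle$ into $C$; applied to the universal extension this gives $K_2(F)\to H_2(\SL_n(F),\Z)$, and the paper then cites Steinberg for the fact that this is an isomorphism. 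So the paper emphasizes the symbol map and the coroot picture --- which generalizes cleanly to other Chevalley groups and leads naturally into Matsumoto's theorem stated next --- whereas your approach is internal to $\SL_n$, makes the role of the exceptional $(n,|F|)$ visible exactly where universality of ${\rm St}_n$ fails, and avoids ever writing down the symbol relations. Note also that you are using Milnor's definition of $K_2$ while the paper uses Matsumoto's; their equivalence for fields is itself a theorem, so strictly speaking the two sketches are proving the statement relative to different definitions.
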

\begin{remark} \normalfont
For $a, b \in F^{\times}$ and $n \geq 3$, let
\begin{displaymath}
r_{a} = \left( \begin{matrix} a & & & & \\ & a^{-1} & & & \\ & & 1 & & \\ & & & \ddots & \\ & & & & 1\end{matrix} \right),
\tau_{b} = \left( \begin{matrix} b & & & & \\ & 1 & & & \\ & & b^{-1} & & \\ & & & \ddots & \\ & & & & 1\end{matrix} \right) \in \SL_{n}(F).
\end{displaymath}
Let $1 \rightarrow C \rightarrow E \xrightarrow{\p} \SL_{n}(E) \rightarrow 1$ be a central extension of $\SL_{n}(F)$. 
Note that $[\tilde{r}_{a}, \tilde{\tau}_{b}]$ is a well defined element in $E$, where $\tilde{r}_{a}$ and $\tilde{\tau}_{b}$ are arbitrary element in the inverse images $\p^{-1}r_{a}$ and $\p^{-1}\tau_{b}$ respectively. 
Since $r_{a}$ and $\tau_{b}$ commute in $\SL_{n}(F)$, $[\tilde{r}_{a}, \tilde{\tau}_{b}] \in C$. 
This defines a map $(a, b) \mapsto \{a, b\}:=[\tilde{r}_{a}, \tilde{\tau}_{b}] \in C$ from $F^{\times} \times F^{\times} \rightarrow C$. 
Moreover,
\[
 \{a, b_{1}b_{2} \} = [\tilde{r}_{a}, \tilde{\tau}_{b_1 b_2}]= [\tilde{r}_a, \tilde{\tau}_{b_1} \tilde{\tau}_{b_2}] = [\tilde{r}_{a}, \tilde{\tau}_{b_1}] \cdot [\tilde{r}_{a}, \tilde{\tau}_{b_2}] \cdot [\tilde{\tau}_{b_1}, [ \tilde{\tau}_{b_2}, \tilde{r}_{a} ] ]^{-1} = \{a, b_1\} \{a, b_2\}.
\]
Similarly $\{a_1 a_2, b\} = \{a_1, b\} \{a_2, b\}$.
Therefore this map is bi-multiplicative in both the coordinates. 
An important point to note is that instead of coroots $r_a$, $\tau_b$ we could have taken any other two distinct coroots for defining the element $\{a,b\}$. This follows because the Weyl group operates transitively on the set of distinct coroots with nonzero inner product. (If the coroots are orthogonal to each other, then they belong to distinct commuting $\SL_2(F)$, and hence their commutator in any central extension is trivial.) 
Furthermore, it can be verified that $\tilde{r}_a \tilde{r}_b = \{a,b\} \tilde{r}_{ab}$ which allows one to prove the last two of the following 
identities (the first being trivial); for all this ---which although is no more than matrix manipulation, is quite tricky, and due to Steinberg--- 
see the book of Milnor \cite{Mil71}.
\begin{enumerate}
 \item $\{a,b\} \{b,a\} =1$.
 \item $\{a, -a\} =1$.
 \item $\{a, 1-a\} =1$.
\end{enumerate}
Thus the map $(a,b) \mapsto \{a,b\}$ factors through 
\[
K_{2}(F) := \dfrac{F^{\times} \otimes F^{\times}}{ \{ a \otimes (1-a) \mid a(1-a) \neq 0 \} }.
\]
Since $\SL_{n}(F)$, $n \geq 3$ is easily seen to be a perfect group, $\SL_{n}(F)$ has a universal central extension with $H_{2}(\SL_{n}(F), \Z)$, as the center of the universal central extension. The above analysis with the universal central extension of $\SL_{n}(F)$ by $H_{2}(\SL_{n}(F), \Z)$ gives a map $K_{2}(F) \rightarrow H_{2}(\SL_{n}(F), \Z)$ which by a theorem of Steinberg is an isomorphism.
\end{remark}

The following more general theorem is due to Matsumoto.

\begin{theorem} 
Let $G$ be a simple, simply connected split algebraic group over an infinite field $F$. Then
\[
H_{2}(G(F), \Z) \cong K_{2}(F)
\]
except for  groups of type $C_{n}, n\geq 1$, when $H_{2}(G(F), \Z)$ has $K_{2}(F)$ as a quotient, and in fact $K_2(F)$ is the maximal quotient of 
$H_2(G(F), \Z)$ on which ${\rm Aut}(G)(F)$ acts trivially.
\end{theorem}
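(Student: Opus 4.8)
The plan is to realise $H_2(G(F),\Z)$ as the kernel of an explicit central extension coming from the Chevalley--Steinberg presentation of $G(F)$, to read off a presentation of that kernel by ``symbols'', and to compare it with Milnor's presentation of $K_2(F)$; the method is uniform in the type, and the exceptional behaviour of $C_n$ surfaces at a single point. First, since $G$ is simple, simply connected and split and $F$ is infinite, $G(F)$ is generated by its root subgroups $x_\alpha(t)\cong(F,+)$ and is perfect, so it admits a universal central extension with kernel $H_2(G(F),\Z)$. Let $\mathrm{St}_\Phi(F)$ be the Steinberg group on generators $x_\alpha(t)$, $\alpha$ in the root system $\Phi$ of $G$ and $t\in F$, subject to the Steinberg relations (the Chevalley commutator formulas, together with the additional Weyl-type relations needed when $\Phi$ has rank $1$, where $\mathsf w_\alpha(t)=x_\alpha(t)x_{-\alpha}(-t^{-1})x_\alpha(t)$). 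By Steinberg's theorem the tautological surjection $\pi\colon\mathrm{St}_\Phi(F)\to G(F)$ is a central extension, and in fact the universal one when $F$ is infinite; hence $\ker\pi\cong H_2(G(F),\Z)$.

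Next, set $\mathsf h_\alpha(t)=\mathsf w_\alpha(t)\mathsf w_\alpha(1)^{-1}$, so that $\pi(\mathsf h_\alpha(t))=\alpha^\vee(t)$ lies in the split maximal torus, and form $c_\alpha(s,t):=\mathsf h_\alpha(s)\mathsf h_\alpha(t)\mathsf h_\alpha(st)^{-1}\in\ker\pi$. Using that $\mathrm{St}_\Phi(F)$ is generated by root groups and that $\ker\pi$ is central, one shows that $\ker\pi$ is generated by these $c_\alpha(s,t)$ and that conjugation by the $\mathsf w_\beta$ makes $c_\alpha$ depend only on the Weyl orbit (i.e.\ the length) of $\alpha$; write $\{s,t\}_\alpha=c_\alpha(s,t)$. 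Then, by exactly the ``tricky matrix manipulation'' indicated in the Remark above for $\SL_n$, one verifies inside $\mathrm{St}_\Phi(F)$ that $\{s,t\}_\alpha$ is central, that $\{s,1-s\}_\alpha=1$ for $s\neq0,1$, and that $\{s,t\}_\alpha$ is bi-multiplicative in $(s,t)$ whenever $\Phi$ contains a root $\beta$ with $\langle\alpha,\beta^\vee\rangle=\pm1$ (one then slides between $\mathsf h_\alpha$ and $\mathsf h_\beta$ via the Weyl relations). Such a $\beta$ exists for every $\alpha$ unless $\Phi$ is of type $C_n$ and $\alpha$ is a long root (recall $C_2=B_2$ and $C_1=A_1$). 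So when $\Phi\neq C_n$ the symbol satisfies exactly Milnor's relations, giving a surjection $K_2(F)\twoheadrightarrow H_2(G(F),\Z)$; exhibiting a central extension of $G(F)$ by $K_2(F)$ whose associated symbol is the tautological one of $K_2(F)$ provides an inverse, and one concludes $H_2(G(F),\Z)\cong K_2(F)$.

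For $\Phi=C_n$ the long-root symbol need not be bi-multiplicative, so the presentation above defines a group that can be strictly larger than $K_2(F)$, and one only obtains that $K_2(F)$ is a quotient of $H_2(G(F),\Z)$. To pin down the quotient, note that $\Aut(G)=\mathrm{PGSp}_{2n}$, so $\Aut(G)(F)=\mathrm{GSp}_{2n}(F)/F^\times$: the inner automorphisms coming from $G(F)$ act trivially on $H_2(G(F),\Z)$, while a similitude of multiplier $\lambda$ acts on the symbols $\{s,t\}_\alpha$ by an explicit formula whose deviation from the identity is precisely the failure of bi-multiplicativity. Hence the largest quotient of $H_2(G(F),\Z)$ on which $\Aut(G)(F)$ acts trivially is the one on which every symbol becomes bi-multiplicative, and on that quotient $\{s,1-s\}_\alpha=1$ forces Milnor's relations — so that quotient is exactly $K_2(F)$.

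The main obstacle is the first step, namely Steinberg's theorem that $\pi\colon\mathrm{St}_\Phi(F)\to G(F)$ is the universal central extension (that $\ker\pi$ is central and that $\mathrm{St}_\Phi(F)$ is centrally closed, which is where ``$F$ infinite'' really enters), together with the verification of the Steinberg identity $\{s,1-s\}_\alpha=1$ in the second step; both are intricate computations with the defining relations, due to Steinberg and carried out in this generality by Matsumoto, after which everything reduces to bookkeeping with root systems and symbol presentations.
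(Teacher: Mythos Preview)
The paper does not give a proof of this theorem: it is simply stated and attributed to Matsumoto, with the reference \cite{Matsumoto} in the bibliography. So there is no ``paper's own proof'' to compare against. Your sketch is, in fact, a faithful outline of the argument Matsumoto actually gives (building on Steinberg): identify $H_2(G(F),\Z)$ with $\ker\big(\mathrm{St}_\Phi(F)\to G(F)\big)$, show this kernel is generated by the symbols $c_\alpha(s,t)=\mathsf h_\alpha(s)\mathsf h_\alpha(t)\mathsf h_\alpha(st)^{-1}$, derive the Steinberg relations, and observe that bi-multiplicativity follows whenever $\alpha$ admits a neighbour $\beta$ with $\langle\alpha,\beta^\vee\rangle=\pm1$ --- which fails precisely for long roots in $C_n$ (including $C_1=A_1$). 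The paper's surrounding Remark after Theorem~12 rehearses exactly this symbol calculus in the special case $\SL_n$, so your approach is fully consonant with what the notes sketch there.

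Two small comments. First, your identification $\Aut(G)=\mathrm{PGSp}_{2n}$ is the adjoint group, which coincides with $\mathrm{PSp}_{2n}$ as an algebraic group; the content of the $\Aut(G)(F)$-action is that $\mathrm{PGSp}_{2n}(F)$ may be strictly larger than the image of $\Sp_{2n}(F)$, and it is these extra similitude classes whose action detects the failure of bi-multiplicativity --- you say this correctly. Second, you rightly flag that the hard work is hidden in ``Steinberg's theorem that $\pi$ is the universal central extension'' and in the verification of $\{s,1-s\}_\alpha=1$; calling the rest ``bookkeeping'' is fair for a sketch, but in Matsumoto's paper the reduction from all $c_\alpha$ to a single Weyl orbit, and the construction of the inverse map (the explicit central extension of $G(F)$ by $K_2(F)$), each take substantial space.
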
 

\begin{remark}
It is easy to see that $K_{2}(\F_{q}) = \{ 1 \}$ for a finite field $\F_{q}$. Also, it is also known that
 simple simply connected algebraic groups $G$, $G(\F_{q})$ have no non-trivial central extensions except in a small number of cases that we enumerate later. 
But the author of these notes has not seen any uniform theorem proving this such as the very precise theorem 12 above due to Steinberg 
(the problem is for small fields such as $\F_2, \F_3, $ and $\F_4$).
\end{remark}

\begin{remark}There seems no such precise theorem for quasi-split groups over general fields. Deodhar has defined in \cite{Deo} what he calls a Moore group
which depends only on the field of which  $H_2(G(F),\Z)$ is a quotient of.   

\end{remark}


\begin{theorem}
If $G$ is simply connected simple algebraic group defined over $\F_{q}$, then
\begin{enumerate}
\item $G^{ab} = \{ e \}$, i.e. $G$ is perfect, except the following: 
\[ \SL_{2}(\F_{2}) \cong S_{3}, \, \SL_{2}(\F_{3}) \cong \tilde{A}_{4}, \, \Sp_{4}(\F_{2}), \,  G_{2}(\F_{2}), \, \SU_{3}(\F_{2}). \]

\item If $G=G^{ab}$, then $G$ is its own universal central extension, except if $G$ is one of the following:

\[ 
\SL_{2}(\F_{4}) \cong \PSL_{2}(\F_{5}), \, \SL_{2}(\F_{9}), \, \SL_{3}(\F_{2}) \cong \PSL_{2}(\F_{7}), \, \SL_{3}(\F_{4}), 
\]
\[ 
\SL_{4}(\F_{2}) \cong A_8, \, \Spin_{7}(\F_{2}) \cong \Sp_{6}(\F_{2}), \, \Sp_{4}(\F_{2}) \cong S_6, \, \Spin_{7}(\F_{3}), \, \Spin_{8}(\F_{2}),  
\]
\[ 
F_{4}(\F_{2}), \ G_{2}(\F_{3}), \, G_{2}(\F_{4}), \, \SU_{4}(\F_{2}), \, \SU_{4}(\F_{3}), \, \SU_{6}(\F_{2}), \, ^{2}E_{6}(\F_{2}). 
\]

\end{enumerate}

\end{theorem}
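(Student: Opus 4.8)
The plan is to treat (1) and (2) separately, in each case establishing the generic assertion by a uniform argument that degenerates only for groups of small rank over small fields, and then pinning down the finite list of degenerate cases by inspection.

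For (1), recall that $G(\F_q)$ is generated by its root subgroups $U_\alpha=\{x_\alpha(t)\mid t\in\F_q\}$ (Bruhat decomposition, or Steinberg's generators and relations), so to prove $G(\F_q)$ perfect it suffices to place each $U_\alpha$ inside $[G(\F_q),G(\F_q)]$. When $\mathrm{rank}\,G\ge 2$ this is read off from the Chevalley commutator formula $[x_\beta(t),x_\gamma(u)]=\prod_{i,j>0}x_{i\beta+j\gamma}(c_{ij}\,t^iu^j)$: for each root $\alpha$ one chooses roots $\beta,\gamma$ with $\alpha=i\beta+j\gamma$ occurring with structure constant $c_{ij}$ nonzero in $\F_q$, which is possible unless the characteristic kills all the relevant $c_{ij}$ --- a phenomenon confined to a handful of pairs $(G,p)$. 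When $\mathrm{rank}\,G=1$ one uses instead the $\SL_2$ identity $[h(s),x_\alpha(t)]=x_\alpha((s^2-1)t)$, which puts $U_\alpha$ into the derived group as soon as $\F_q^\times$ contains an $s$ with $s^2\ne 1$, i.e.\ $q>3$. Twisted groups are handled by the analogous Steinberg presentation. Collecting the failures yields exactly $\SL_2(\F_2)\cong S_3$, $\SL_2(\F_3)\cong\tilde{A}_4$, $\Sp_4(\F_2)$, $G_2(\F_2)$ and $\SU_3(\F_2)$; for each of these one verifies directly (via the stated isomorphisms, or the Weyl-group action on the diagonal torus) that $G^{ab}\ne\{e\}$, and one checks that no others occur. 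Steinberg's \emph{Lectures on Chevalley groups} is the template for all of this.

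For (2), once $G(\F_q)$ is perfect it has a universal central extension $\widetilde{G}$ with kernel $H_2(G(\F_q),\Z)$. The generic vanishing $H_2(G(\F_q),\Z)=\{e\}$ is Steinberg's theorem over finite fields: $G(\F_q)$ is presented by the Steinberg generators and relations, so $\widetilde{G}$ is the associated Steinberg group, whose center is governed by $K_2(\F_q)$; and $K_2(\F_q)=\{1\}$ since $\F_q^\times$ is cyclic, as already noted. Hence every $G(\F_q)$ lying outside the range in which the Steinberg presentation fails to be complete --- again small rank over small fields, together with the extra central behaviour special to type $C$ that already surfaces in Matsumoto's theorem --- is its own universal central extension.

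The remaining step, which I expect to be the main obstacle, is to establish the exceptional list in (2): no uniform argument is known, and one must determine the Schur multiplier of each of the finitely many surviving groups by hand. I would split them into two kinds. The cases covered by exceptional isomorphisms --- $\SL_2(\F_4)\cong\PSL_2(\F_5)\cong A_5$, $\SL_3(\F_2)\cong\PSL_2(\F_7)$, $\SL_2(\F_9)\cong 2.A_6$, $\SL_4(\F_2)\cong A_8$ and $\Sp_4(\F_2)\cong S_6$ --- reduce to the already-known multipliers of alternating and symmetric groups and of $\PSL_2$ over finite fields. The genuinely Lie-theoretic exceptions --- $\SL_3(\F_4)$, $\Spin_7(\F_2)\cong\Sp_6(\F_2)$, $\Spin_7(\F_3)$, $\Spin_8(\F_2)$, $F_4(\F_2)$, $G_2(\F_3)$, $G_2(\F_4)$, $\SU_4(\F_2)$, $\SU_4(\F_3)$, $\SU_6(\F_2)$ and ${}^2E_6(\F_2)$ --- are computed directly, e.g.\ from the Steinberg-type presentation with the low-degree structure constants substituted, or by citing the explicit determinations in the work of Griess and in the standard tables of finite simple groups. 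The content of (2) is then the claim that this list is complete and that each listed multiplier is nontrivial of the stated order; the bookkeeping required to be sure no case is missed --- precisely the difficulty flagged in the remark following Matsumoto's theorem for the small fields $\F_2,\F_3,\F_4$ --- is the crux.
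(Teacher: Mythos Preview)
The paper does not actually give a proof of this theorem: it is stated as a known classification result, with no \texttt{proof} environment following it, and the surrounding remarks (in particular the one after Matsumoto's theorem) explicitly say that the author ``has not seen any uniform theorem proving this'' for the small fields $\F_2,\F_3,\F_4$. So there is nothing to compare your argument against; the paper is simply recording the list, with the implicit references being Steinberg's \emph{Lectures on Chevalley groups} and the tables of finite simple groups cited in the bibliography.

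That said, your sketch is the standard outline and is sound as far as it goes. Part~(1) via the Chevalley commutator formula and the $\SL_2$ identity is exactly how Steinberg proves perfectness in the generic range, and your identification of where the argument degenerates is correct. For part~(2), invoking the Steinberg presentation and $K_2(\F_q)=1$ is again the right generic mechanism. Your own honest caveat --- that the exceptional list in~(2) is not obtained by any uniform method but by separate computations due to Griess and others, collated in the Atlas-type tables --- is precisely the point the paper is making in its remark: this is bookkeeping, not a theorem with a single proof. So your proposal is an accurate account of how the result is established in the literature, but you should be aware that you are reconstructing a body of case-by-case work rather than matching a proof the paper supplies.
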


\begin{remark} \normalfont
Let $Z$ be the center of a connected algebraic group $G$ defined over $\F_{q}$ which we assume is a finite (algebraic) group. It is curious to observe that the central extension
\[
1 \rightarrow Z(\F_{q}) \rightarrow G(\F_{q}) \rightarrow G(\F_{q})/Z(\F_{q}) \rightarrow 1
\]
has a ``dual extension", given by the following part of the long exact sequence associated to the exact sequence $1 \rightarrow Z \rightarrow G \rightarrow G/Z \rightarrow 1$
of algebraic groups:
\[
1 \rightarrow Z(\F_{q}) \rightarrow G(\F_{q}) \rightarrow (G/Z)(\F_{q}) \rightarrow H^{1}({\rm Gal}(\overline{\F}_{q} / \F_{q}), Z) \rightarrow H^{1}({\rm Gal}(\overline{\F}_{q} / \F_{q}), G) \rightarrow \cdots,
\]
but $H^{1}({\rm Gal}(\overline{\F}_{q} / \F_{q}), G) = \{ 1 \}$ by Lang's theorem. Thus we have 
\[
1 \rightarrow Z(\F_{q}) \rightarrow G(\F_{q}) \rightarrow (G/Z)(\F_{q}) \rightarrow H^{1}({\rm Gal}(\overline{\F}_{q} / \F_{q}), Z) \rightarrow 1.
\]
It is well known that for ${\rm Gal}(\overline{\F}_{q} / \F_{q}) = \hat{\Z}$, and for a module $A$ of $\hat{\Z}$ with $A^\vee=Hom(A,\Q/\Z)$ with natural $\hat{\Z}$ structure, 
there is a perfect pairing:

$$H^1(\hat{\Z}, A) \times H^0(\hat{\Z}, A^\vee)\rightarrow H^1(\hat{\Z}, \Q/\Z) = \Q/\Z,$$
 and hence
\[
H^{1}({\rm Gal}(\overline{\F}_{q} / \F_{q}), Z) \cong  Z(\F_{q}). 
\] 
Therefore the above part of the long exact can be written as the following short exact sequence:
\[
1 \rightarrow G(\F_{q})/Z(\F_{q}) \rightarrow (G/Z)(\F_{q}) \rightarrow Z(\F_{q}) \rightarrow 1.
\]
\end{remark}

\section{Central extension of Algebraic groups}

The following basic theorem is due to C. Moore, Matsumoto, Deodhar, G. Prasad, Raghunathan, and Rapinchuk.

\begin{theorem} Let $G$ be an absolutely simple, simply connected algebraic group which is isotropic over $k$, a  
non-Archimedean local field, with $\mu(k)$ the cyclic group of roots of unity in $k$, or $k=\R$, and $G$ split but not of type $C_n, n \geq 1$.
Then there exists a natural isomorphism $H^2(G(k), \Q/\Z) 
\cong \Hom(\mu(k), \Q/\Z).$
\end{theorem}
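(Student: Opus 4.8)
The plan is to identify $H^2(G(k),\Q/\Z)$ — which here should be read as the group of \emph{continuous} central extensions of the topological group $G(k)$ by the discrete group $\Q/\Z$ — with $\Hom(\pi_1(G(k)),\Q/\Z)$, where $\pi_1(G(k))$ is the kernel of the universal topological central extension of $G(k)$ in the sense of Moore (for $k=\R$ this is just the topological fundamental group of the Lie group $G(\R)$), and then to prove $\pi_1(G(k))\cong\mu(k)$. The first step is soft: since $G$ is semisimple, simply connected and split of positive rank, $G(k)$ is topologically perfect — it is generated by the root subgroups $U_\alpha(k)$, and the Steinberg commutator relations together with conjugation by the split torus put each $U_\alpha(k)$ into the closure of $[G(k),G(k)]$ — so a universal topological central extension exists, and the topological analogue of the universal coefficient theorem (in the form used by Moore) gives a natural isomorphism $H^2(G(k),\Q/\Z)\cong\Hom(\pi_1(G(k)),\Q/\Z)$. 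Everything thus reduces to computing $\pi_1(G(k))$.

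Next I would run the Steinberg–Matsumoto analysis, in the style of the $K_2$ discussion recalled earlier: given a continuous central extension $1\to A\to E\to G(k)\to 1$ and a pair of distinct coroots with nonzero inner product, one extracts a symbol $\{a,b\}=[\tilde r_a,\tilde\tau_b]\colon k^\times\times k^\times\to A$ which is continuous, bimultiplicative, and satisfies $\{a,-a\}=1$ and $\{a,1-a\}=1$; transitivity of the Weyl group on such coroot pairs shows it is independent of the choices, and — this is where the hypothesis that $G$ is \emph{not} of type $C_n$ enters — Matsumoto's theorem quoted above says that this symbol captures the full isomorphism class of $E$. Applying this to the universal topological central extension identifies $\pi_1(G(k))$ with the group classifying universal continuous Steinberg symbols on $k^\times$, namely the topological $K_2$, written $K_2^{\mathrm{top}}(k)$. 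For $G$ of type $C_n$ the long coroots contribute extra symbols detecting square classes of $k$, so $H_2(G(k),\Z)\supsetneq K_2(k)$ and the answer would be strictly larger — precisely the excluded case.

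Finally I would invoke Moore's computation of continuous Steinberg symbols over a local field. When $k$ is non-Archimedean, the $m$-th norm residue (Hilbert) symbol $k^\times\times k^\times\to\mu(k)$ with $m=\#\mu(k)$ is a continuous Steinberg symbol through which, by local class field theory, every continuous Steinberg symbol on $k^\times$ factors uniquely; hence $K_2^{\mathrm{top}}(k)\cong\mu(k)$. When $k=\R$, a continuous Steinberg symbol vanishes on the divisible group $\R^\times_{>0}$, so it factors through $\{\pm1\}\times\{\pm1\}$, and symmetry together with $\{a,-a\}=1$ forces it to be trivial or the sign symbol valued in $\{\pm1\}=\mu(\R)$ — equivalently $\pi_1(G(\R))\cong\pi_1(K)\cong\Z/2$ for $K$ a maximal compact subgroup of a split, simply connected group not of symplectic type, as in the discussion of real groups above. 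In either case $\pi_1(G(k))\cong\mu(k)$, hence $H^2(G(k),\Q/\Z)\cong\Hom(\pi_1(G(k)),\Q/\Z)\cong\Hom(\mu(k),\Q/\Z)$; unravelling the construction, this isomorphism carries a central extension to the $\Q/\Z$-valued Steinberg symbol it defines, which is manifestly natural and compatible with the $K_2$ picture of the preceding sections.

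The main obstacle is the middle step: showing that a continuous $2$-cocycle on $G(k)$ is, up to a continuous coboundary, completely determined by its restriction to a maximal split torus and the resulting Steinberg symbol — the Bruhat–Tits/Matsumoto–Prasad–Raghunathan reduction to rank one — carried out with continuity under control at every stage, together with the root-combinatorial bookkeeping that isolates the different behaviour of type $C_n$. By contrast, Moore's identification $K_2^{\mathrm{top}}(k)\cong\mu(k)$, though deep, is classical and can be cited as a black box, and the passage from Matsumoto's abstract Schur multiplier $K_2(k)$ to its maximal locally compact Hausdorff quotient $\mu(k)$ is a comparatively soft topological argument.
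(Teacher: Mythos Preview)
The paper does not actually prove this theorem: it is stated as a result due to Moore, Matsumoto, Deodhar, G.~Prasad, Raghunathan, and Rapinchuk, and then used as input for the subsequent functoriality discussion. So there is no ``paper's own proof'' to compare against; what you have written is, in outline, the standard route through the literature --- Moore's topological universal central extension and universal coefficient argument, Matsumoto's reduction of the Schur multiplier to Steinberg symbols (hence to $K_2$) in the split non-symplectic case, and Moore's computation of the topological $K_2$ of a local field via the Hilbert symbol. For the split case your sketch is accurate and well organized, and your identification of where the $C_n$ exclusion enters is exactly right.

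The genuine gap is coverage of the hypotheses. The theorem, as stated, asks only that $G$ be \emph{isotropic} over a non-Archimedean local field $k$; the ``split but not of type $C_n$'' clause applies only to $k=\R$. Your argument, however, explicitly assumes $G$ split from the outset (``since $G$ is semisimple, simply connected and split of positive rank\ldots''), and the Steinberg--Matsumoto machinery you invoke is the split theory. Passing from split to general isotropic $G$ over a $p$-adic field is precisely the contribution of Deodhar and of Prasad--Raghunathan/Prasad--Rapinchuk in the attribution: one must work with the relative root system, the groups $G_\alpha$ attached to relative roots need not be $\SL_2$, and showing that the symbol on a long relative root already determines the extension (and lands in $\mu(k)$) requires their structural results --- essentially the content of the next theorem the paper quotes, on injectivity of restriction to $G_\alpha$ for $\alpha$ a long relative root. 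You gesture at this in your final paragraph (``Bruhat--Tits/Matsumoto--Prasad--Raghunathan reduction to rank one''), but the body of the argument does not actually carry it out, so as written your proof establishes strictly less than the stated theorem.
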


Given this theorem, two important questions immediately come to mind (which is also the way the theorem is proved):

\begin{enumerate}
\item What is the functorial nature of the group $H^2(G(k), \Q/\Z) $ as the field $k$ varies?

\item What is the functorial nature of the group $H^2(G(k), \Q/\Z) $ as the group $G$  varies?

\end{enumerate}

The following theorem due to C. Moore, Deodhar, G. Prasad, Raghunathan, and Rapinchuk is useful to answer such questions.

\begin{theorem} Let $G$ be an absolutely simple, simply connected algebraic group which is isotropic over $k$, a local 
field. Let $S$ be a maximal split torus in $G$, and $\alpha$ a root of $G$ with respect to $S$. Let $G_\alpha$ be the 
(simply connected semi-simple) group generated by the root subgroups $U_\alpha$ and $U_{-\alpha}$. Then if $\alpha$ is a long root
in the relative root system of $G$ with respect to $S$ (which if the root system is not reduced means that $\alpha/2$ is a root), 
the restriction map from   
$H^2(G(k), \Q/\Z) $ to $H^2(G_{\alpha}(k), \Q/\Z) $ is injective.
\end{theorem}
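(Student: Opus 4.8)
The plan is to dualize the statement via the universal coefficient theorem and then to exploit the known structure of the universal central extension of $G(k)$. Since $G$ is simply connected and isotropic over the local field $k$, the Kneser--Tits theorem says that $G(k)$ is generated by the $k$-points of the unipotent radicals of its parabolic $k$-subgroups; in particular $G(k)$ is perfect, and so is the rank-one group $G_\alpha(k)$. Hence $H^2(G(k),\Q/\Z)\cong\Hom(H_2(G(k),\Z),\Q/\Z)$ and likewise for $G_\alpha(k)$, and the restriction map of the theorem is the $\Q/\Z$-dual of the map $\iota_*\colon H_2(G_\alpha(k),\Z)\to H_2(G(k),\Z)$ induced by inclusion. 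As $\Q/\Z$ is an injective cogenerator of abelian groups, injectivity of the restriction map is equivalent to surjectivity of $\iota_*$, so it suffices to show that every element of the Schur multiplier of $G(k)$ already comes from $G_\alpha(k)$.

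To this end I would use the Steinberg-type presentation of $G(k)$ relative to its $k$-root system: generators $x_\beta(u)$ for $\beta$ a relative root and $u\in U_\beta(k)$, together with the Weyl elements $w_\beta$ and the torus elements, subject to the commutator relations, the relations for conjugation by the $w_\beta$, and the quadratic relations among them. For such a presentation one knows --- the relative form of the computation recalled in the remark after Steinberg's theorem --- that the kernel of the universal central extension of $G(k)$ is generated by the Steinberg symbols $\{a,b\}_\beta=[\tilde r_a,\tilde\tau_b]$ formed inside the various rank-one subgroups $G_\beta(k)$. I would then show each such symbol lies in the image of $\iota_*$. If $\beta$ is in the relative Weyl orbit of $\alpha$ this is immediate, since the relative Weyl group acts by conjugation on the universal central extension, carrying $G_\beta$ to $G_\alpha$ and $\{a,b\}_\beta$ to $\{a,b\}_\alpha$; since $G$ is absolutely simple its relative root system is irreducible, so all long roots lie in one Weyl orbit and this handles every long $\beta$ at once. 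If $\beta$ is a short root (or one whose double is again a root, in the non-reduced case), one must instead use the commutator relations tying $U_\beta$ to the long-root subgroups, together with the Steinberg identities $\{a,b\}\{b,a\}=1$, $\{a,-a\}=1$, $\{a,1-a\}=1$ and Moore's rank-one computation, to rewrite $\{a,b\}_\beta$ in terms of long-root symbols.

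The main obstacle is precisely this last reduction, and it is here that the hypothesis that $\alpha$ be \emph{long} (in the non-reduced case, that $\alpha/2$ is also a root) is essential. For a long root $G_\alpha$ is the largest rank-one subgroup available --- an $\SL_2$ in the reduced case, and the quasi-split rank-one special unitary group when $\alpha/2$ is a root --- and the inclusion $G_\alpha\hookrightarrow G$ induces a surjection on Schur multipliers; in the real case this already reflects the fact that a long-root $\SL_2$ realizes a generating circle in a maximal compact subgroup, whereas a short-root $\SL_2$ can bound and thus fail to see the full multiplier. Making the reduction of short-root symbols to long-root symbols rigorous requires a careful analysis of the Chevalley-type structure constants --- over a separable quadratic algebra or a quaternion division algebra in the quasi-split non-split and the non-quasi-split cases --- and this bookkeeping, valid in the stated generality, is the technical core of the theorem and is due to Moore, Deodhar, G. Prasad and Raghunathan.

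Once every generator of $H_2(G(k),\Z)$ is shown to lie in the image of $\iota_*$, surjectivity of $\iota_*$ follows and, dualizing, so does the asserted injectivity of $H^2(G(k),\Q/\Z)\to H^2(G_\alpha(k),\Q/\Z)$. I note that the endgame shortens considerably when one already knows $H^2(G(k),\Q/\Z)\cong\Hom(\mu(k),\Q/\Z)$ --- for instance by the preceding theorem, when $G$ is split and not of type $C_n$: combined with Moore's rank-one computation of $H^2(G_\alpha(k),\Q/\Z)$, one is left only to check that the restriction map is the natural one at the level of Hilbert symbols, which is built into the construction of that isomorphism via a long-root subgroup.
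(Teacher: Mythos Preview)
The paper does not supply a proof of this theorem; it is simply stated as a known result, attributed to C.~Moore, Deodhar, G.~Prasad, Raghunathan, and Rapinchuk, and then invoked as a tool for the two functoriality questions that follow. So there is no ``paper's own proof'' to compare against.

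That said, your outline is a faithful sketch of the strategy actually used in those references (particularly Deodhar \cite{Deo} and Prasad--Raghunathan \cite{PrRa84}): dualize via the universal coefficient theorem to reduce injectivity on $H^2$ to surjectivity of $\iota_*\colon H_2(G_\alpha(k),\Z)\to H_2(G(k),\Z)$; use a Steinberg--Matsumoto--Deodhar presentation relative to the $k$-root system to see that the Schur multiplier is generated by symbols attached to relative coroots; push the long-root symbols into $G_\alpha$ by Weyl-group transitivity on long roots; and finally rewrite the short-root (or multipliable-root) symbols in terms of long-root ones via the commutator relations. You are right that this last reduction is the genuine technical content and that the hypothesis ``$\alpha$ long'' (with the non-reduced convention) is exactly what makes it go through. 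You are also candid that you are deferring that computation to the cited authors --- which is precisely what the paper itself does.
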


Regarding question 1 above, let $E/F$ be a finite separable extension of non-Archimedean local fields. Let $m$ be an integer such
that the $m$-th roots of unity are contained in $F^\times$. Let $(-,-)_m^F$ denote 
the Hilbert symbol on $F$ with values in the $m$-th roots of unity in $F$, and similarly, let $(-,-)_m^E$ denote 
the Hilbert symbol on $E$ with values in the $m$-th roots of unity in $E$. 
Then it is known that,
$$(a,b)_m^E = (a,b)_m^{F\, d},$$
where $a,b \in F^\times$, and $d$ is the degree of the field extension $E/F$. By the way central extensions of $G(E)$ and $G(F)$ are constructed using Hilbert symbols,
it follows that 
the restriction from $H^2(G(E),\Q/\Z)$ to $H^2(G(F),\Q/\Z)$ lands inside $d \cdot H^2(G(F),\Q/\Z)$, in particular any
central extension of $G(E)$ by $\mu_d \subset F^\times$ becomes trivial when restricted to $G(F)$ whenever the degree of $E$ over $F$ is a multiple of 
$d$.

Regarding question 2 above, there is a general recipe due to Deligne. For this assume that $G$ and $H$ are simple, 
simply connected, split groups with maximal tori $T$ and $S$, and a morphism $\phi: H \rightarrow G$ defined over $F$ taking $S$ to $T$.
Assume that $\alpha^\vee: \G_m\rightarrow S \subset H$ is a coroot in $H$ corresponding to a long root $\alpha$ for $H$ 
 which under $\phi$ goes to the coroot $\phi(\alpha^\vee)$ for $G$.   Fix a Weyl group invariant positive definite integral bilinear
form on the cocharacter group of $T$ such that the corresponding quadratic form $Q_G$ takes the value 1 on any coroot corresponding to a long 
root of $T$ in $G$. Let $d = Q_G(\phi(\alpha^\vee))$. 
Then the restriction from $H^2(G(F),\Q/\Z)$ to $H^2(H(F),\Q/\Z)$ has its image equal to  $d \cdot H^2(H(F),\Q/\Z)$.

\begin{remark} Although it is not meaningful to talk about central extensions of $G(\bar{k})$ for the algebraic closure $\bar{k}$ of a 
non-Archimedean local field $k$, since there are none which are non-trivial, for each finite Galois extension $K$ of $k$, there is a rich supply of central extensions, 
say $E(K)$ of $G(K)$ which carry the Galois action too; the only (slight) issue is that the Gal$(K/k)$-invariants in $E(K)$ is an extension 
of $G(k)$ is not necessarily a non-trivial extension---still a perfectly fine context to think about basechange issues.
\end{remark}

\section{Some results about Galois groups of number fields}

The notion of Schur multiplier, and the dual notion of constructing extensions of a group are also studied for Galois groups of number fields and local fields, and have important implications in the subject. We simply state two most important results on these.

\begin{theorem}[Tate]
For any number field $F$, $H^{2}(Gal(\bar{F}/F), \C^{\times}) = \{ 1 \}$.
\end{theorem}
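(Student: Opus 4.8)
The plan is to reduce the statement to Galois cohomology with finite cyclic coefficients, dispose of the ``uniquely divisible'' part by a soft profinite argument, compute the local contributions via strict cohomological dimension, and then run Poitou--Tate global duality, at which point the only genuinely arithmetic ingredient --- the Grunwald--Wang theorem --- enters. Write $G_F:=\mathrm{Gal}(\bar F/F)$ and read $H^2(G_F,\C^\times)$ as continuous cohomology with $\C^\times$ discrete (equivalently, central extensions of $G_F$ by $\C^\times$ admitting a continuous section). A continuous $2$-cocycle on the compact group $G_F$ with values in a discrete group is locally constant, hence has finite image and lands in some $\mu_N\subset\C^\times$; thus $H^2(G_F,\C^\times)=\varinjlim_N H^2(G_F,\mu_N)=\varinjlim_N H^2(G_F,\Z/N\Z)=H^2(G_F,\Q/\Z)$, the actions being trivial. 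So it suffices to show $H^2(G_F,\Q/\Z)=0$, and we write $H^2(G_F,\Q/\Z)=\varinjlim_n H^2(G_F,\Z/n\Z)$. These individual groups need \emph{not} vanish (for $F=\Q$, $n=2$ one gets $\mathrm{Br}(\Q)[2]$), so the content is that everything dies in the colimit.

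First I would record the local vanishing $H^2(G_{F_v},\Q/\Z)=0$ for every place $v$. For $v$ non-archimedean this holds because the strict cohomological dimension of a non-archimedean local field is $2$: from $0\to\Z\to\Q\to\Q/\Z\to0$ and the vanishing of $H^{\geq1}(G_{F_v},\Q)$ (a $\Q$-vector space with trivial action over a profinite group) one obtains $H^2(G_{F_v},\Q/\Z)\cong H^3(G_{F_v},\Z)=0$. For $v$ real, $H^2(\Z/2\Z,\Q/\Z)=(\Q/\Z)/2(\Q/\Z)=0$ by divisibility; for $v$ complex the group is trivial.

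Next I would feed this into the Poitou--Tate nine-term exact sequence for the finite module $M=\Z/n\Z$, whose Cartier dual is $M'=\mu_n$, and pass to the colimit over $n$ (filtered colimits are exact and commute with the restricted products of local cohomology groups, which reduce to direct sums here since the unramified parts vanish). By the previous step $\varinjlim_n\bigoplus_v H^2(G_{F_v},\Z/n\Z)=\bigoplus_v H^2(G_{F_v},\Q/\Z)=0$, so the localization map dies in the colimit and $H^2(G_F,\Q/\Z)=\varinjlim_n\mathrm{Sha}^2(G_F,\Z/n\Z)$, where $\mathrm{Sha}^i(G_F,M)$ denotes the subgroup of $H^i(G_F,M)$ of classes locally trivial at every place. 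Poitou--Tate duality gives $\mathrm{Sha}^2(G_F,\Z/n\Z)\cong\mathrm{Sha}^1(G_F,\mu_n)^\vee$, the transition maps dualizing to the natural surjections $F^\times/(F^\times)^m\twoheadrightarrow F^\times/(F^\times)^n$; via Kummer theory $\mathrm{Sha}^1(G_F,\mu_n)$ is exactly the group of $a\in F^\times/(F^\times)^n$ that become $n$-th powers in every completion of $F$. Hence $H^2(G_F,\Q/\Z)\cong\big(\varprojlim_n\mathrm{Sha}^1(G_F,\mu_n)\big)^\vee$, and the theorem reduces to the vanishing $\varprojlim_n\mathrm{Sha}^1(G_F,\mu_n)=0$.

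The last step is where the arithmetic lives, and it is the main obstacle. By the Grunwald--Wang theorem $\mathrm{Sha}^1(G_F,\mu_n)$ is trivial unless $(F,n)$ lies in the ``special case'', in which case it is cyclic of order $2$; this special case is exactly the subtle failure of the naive local--global principle for being an $n$-th power that Wang discovered (illustrated by $16$ being an eighth power in $\Q_p$ for every odd prime $p$ and in $\R$, though not in $\Q_2$ nor in $\Q$, so that no density argument can detect it). One then has to understand how these order-$2$ groups sit in the tower $\{F^\times/(F^\times)^n\}$ and verify that, whatever they are, they are not compatible under the transition surjections, so that they contribute nothing to the inverse limit; this yields $\varprojlim_n\mathrm{Sha}^1(G_F,\mu_n)=0$ and therefore $H^2(\mathrm{Gal}(\bar F/F),\C^\times)=0$. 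Everything outside this last step --- the reduction to $\Q/\Z$ coefficients, the profinite argument for the divisible part, the local computation, and the colimit bookkeeping in Poitou--Tate --- is essentially formal.
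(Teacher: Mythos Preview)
The paper states Tate's theorem without proof, so there is no ``paper's own proof'' to compare against; I can only assess your argument on its merits.

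Your reduction is sound: passing from $\C^\times$ to $\Q/\Z$ via the profinite/finite-image argument, the local vanishing $H^2(G_{F_v},\Q/\Z)=0$ at every place, and the use of the Poitou--Tate sequence to identify $H^2(G_F,\Q/\Z)$ with $\bigl(\varprojlim_n \mathrm{Sha}^1(G_F,\mu_n)\bigr)^\vee$ are all correct and carefully stated. The weak point is the final step. You assert that the order-$2$ groups furnished by Grunwald--Wang ``are not compatible under the transition surjections,'' but you do not verify this, and it is not obvious: a priori, if the special case persisted along a cofinal chain of multiples and the explicit generators were compatible under $F^\times/(F^\times)^m\twoheadrightarrow F^\times/(F^\times)^n$, the inverse limit would be $\Z/2$. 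To close this you would need to invoke the explicit description of the generator in the special case (as in Artin--Tate, Ch.~X, or NSW, \S IX.1) and check the transition behaviour directly; your parenthetical about $16$ in $\Q$ is evocative but does not do this, since that element fails at $v=2$ and hence does not lie in $\mathrm{Sha}^1(\Q,\mu_8)$ at all.

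A shorter route that bypasses this delicacy: from $0\to\Z\to\Q\to\Q/\Z\to 0$ one gets $H^2(G_F,\Q/\Z)\cong H^3(G_F,\Z)=\varinjlim_{K/F}H^3(\mathrm{Gal}(K/F),\Z)$, and Tate--Nakayama (cup product with the fundamental class) identifies each $H^3(\mathrm{Gal}(K/F),\Z)$ with $\hat H^{1}(\mathrm{Gal}(K/F),C_K)$, which vanishes by the standard class-field-theoretic fact $H^1(\mathrm{Gal}(K/F),C_K)=0$. This uses the same depth of input (global class field theory) but avoids the case analysis in Grunwald--Wang entirely.
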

\begin{theorem}[Neukirch]
The group of outer automorphism $\Out(Gal(\bar{\Q}/\Q)) = \{ 1 \}$. 
\end{theorem}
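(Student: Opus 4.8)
\noindent\emph{Proof strategy.} This is the Neukirch--Uchida theorem, and the plan is to follow its architecture: reconstruct the field $\Q$ --- indeed $\bar{\Q}$ together with the action of $G_{\Q} := \mathrm{Gal}(\bar{\Q}/\Q)$ on it --- in a manner that depends only on the profinite group $G_{\Q}$ and is functorial for isomorphisms. Granting such a reconstruction, the theorem follows formally. An automorphism $\sigma \in \Aut(G_{\Q})$ acts on the reconstructed field $\Q$; but $\Aut(\Q) = \{1\}$, so $\sigma$ fixes $\Q$. It therefore lifts to a field automorphism $\tilde\sigma$ of $\bar{\Q}$ fixing $\Q$ and intertwining the $G_{\Q}$-action on $\bar{\Q}$ with its $\sigma$-twist; hence $\tilde\sigma \in \Aut(\bar{\Q}/\Q) = G_{\Q}$, and unwinding the functoriality shows that $\sigma$ is conjugation by $\tilde\sigma$. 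Thus every automorphism of $G_{\Q}$ is inner and $\Out(G_{\Q}) = \{1\}$.

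The first step in the reconstruction is to recover the set of places of $\Q$ as the conjugacy classes of decomposition subgroups $D_v \subset G_{\Q}$, using Neukirch's purely group-theoretic characterization: for a non-archimedean place $D_v$ is, up to conjugacy, a maximal closed subgroup isomorphic to a local absolute Galois group --- itself a group-theoretic condition in view of the known profinite structure of such groups --- and this maximality can be detected internally via cohomological dimension $2$, local Tate duality for the subgroup, and the pro-$p$ structure of its wild inertia; the real place is the conjugacy class of self-normalizing subgroups of order $2$. The same applies to every finite subextension $K/\Q$, so one recovers the whole tower of number fields with their places, decomposition groups and inertia groups. A preliminary consistency point --- that an automorphism of $G_{\Q}$ cannot permute the places nontrivially --- follows from the abelianization: by Kronecker--Weber $G_{\Q}^{\mathrm{ab}} \cong \widehat{\Z}^{\times} = \prod_p \Z_p^{\times}$, the image of the inertia group at $p$ is the factor $\Z_p^{\times}$, and the topological groups $\Z_p^{\times}$ are pairwise non-isomorphic (each has a unique infinite Sylow subgroup, the pro-$p$ one), so each prime is pinned to itself; the Frobenius elements, which are dense by Chebotarev, then force $\sigma$ to act trivially on $G_{\Q}^{\mathrm{ab}}$.

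Next one recovers the multiplicative arithmetic. Global class field theory identifies $G_{\Q}^{\mathrm{ab}}$ with the profinite completion of the idele class group $C_{\Q} = \mathbb{A}_{\Q}^{\times}/\Q^{\times}$, and the decomposition groups recovered above supply the local factors $\Q_v^{\times}$; running this over the whole tower produces, functorially in $G_{\Q}$, the inverse system of idele class groups and hence the multiplicative groups $K^{\times}$ of all number fields $K \subset \bar{\Q}$ together with their valuations. The conjugator ambiguities that arise when one restricts $\sigma$ to the various $D_v$ are reconciled by a compactness argument: the cosets of admissible conjugating elements are nonempty compact subsets of the relevant finite quotients, so an inverse limit yields one coherent choice.

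The genuinely hard step --- the one that occupies the bulk of Uchida's work, going beyond Neukirch's original treatment of the maximal solvable extension --- is to recover the \emph{additive} structure, i.e. to realize $\Q$ as a field from the multiplicative and valuation-theoretic data assembled above. The main obstacle is precisely this: the product formula together with the Hasse principle (global reciprocity) must be used to characterize $\Q^{\times}$ intrinsically inside the restricted product of the local groups $\Q_v^{\times}$, and then addition on $\Q$ reconstructed from the diagonal embedding and the structure of the completions $\Q_v$ --- all functorially in $G_{\Q}$. Once this is achieved, the formal argument of the first paragraph applies and gives $\Out(\mathrm{Gal}(\bar{\Q}/\Q)) = \{1\}$.
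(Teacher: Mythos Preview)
The paper does not supply a proof of this theorem: it is simply quoted as a result of Neukirch, alongside Tate's theorem on $H^{2}$, with no argument given. So there is no ``paper's own proof'' to compare against.

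As a standalone sketch, your outline is the standard Neukirch--Uchida architecture and the main beats are in the right order: characterize decomposition subgroups group-theoretically, recover the tower of number fields with their places, use class field theory to rebuild the multiplicative groups, and then (the hard part) recover the additive structure. The reduction in your first paragraph from ``$\bar{\Q}$ is reconstructible functorially'' to $\Out(G_{\Q})=1$ is clean and correct.

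That said, a few of the intermediate steps are stated more confidently than the actual arguments warrant. Neukirch's characterization of the decomposition groups is not quite ``maximal closed subgroup isomorphic to a local Galois group''; the actual criterion goes through a delicate maximality among subgroups whose quotient by a certain normal subgroup has the right Brauer-theoretic behaviour, and the proof that distinct primes cannot be confused already uses more than the abelianization argument you sketch (though your observation that the $\Z_p^{\times}$ are pairwise non-isomorphic is correct and useful). More seriously, the last paragraph is where essentially all the work lives, and ``addition on $\Q$ reconstructed from the diagonal embedding and the structure of the completions'' hides exactly the step that took Uchida's separate paper to carry out; your description does not really indicate \emph{how} one pins down the additive law from the multiplicative and valuation data. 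So as a proof this is an honest high-level map of the Neukirch--Uchida route, but it is not self-contained, and the paper itself makes no attempt at a proof either.
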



\begin{thebibliography}{99}
\bibitem{Brown82}
Kenneth S. Brown:
\emph{Cohomology of groups}.
Graduate Texts in Mathematics, 87. Springer-Verlag, New York-Berlin, 1982. x+306 pp.

\bibitem{Deo} Vinay, Deodhar. \emph{On central extensions of rational points of algebraic groups.}
 Amer. J. Math. 100 (1978), no. 2, 303-386.

\bibitem{Mil71}
  John Milnor:
 \emph{Introduction to algebraic K-theory}. 
 Annals of Mathematics Studies, {\bf No. 72}, Princeton University Press, Princeton, N. J.; 1971.

\bibitem{Matsumoto}
Hideya Matsumoto:
\emph{Sur les sous-groupes arithm\'etiques des groupes semi-simples d\'eploy\'es}
Ann. Sci. \'Ecole Norm. Sup. (4) 2 (1969) 1-62. 

\bibitem{PrRa84}
Gopal Prasad and M. S. Raghunathan:
\emph{Topological central extensions of semisimple groups over local fields}. 
Ann. of Math. (2) {\bf 119} (1984), no. 1, 143-201.


\bibitem{PrRa} Gopal Prasad and Andrei Rapinchuk: \emph{Computation of the metaplectic kernel.} Inst. Hautes \'Etudes Sci. Publ. Math. No. 84 (1996), 
91-187 (1997)

\bibitem{Steinberg62}
R. Steinberg:
\emph{G{\'e}ne{\'e}rateurs, relations et rev{\^e}tements de groupe algebriques}.
Colloq. Th{\'e}orie des groupes algebriques, Bruxelles (1962) 113-127.

\bibitem{Steinberg62}
R. Steinberg:
\emph{Lecture on Chevalley groups}.
Yale University (1967).

\bibitem{Wikipedia}
Wikipedia:
\emph{List of finite simple groups}. Available at: \\
{http://en.wikipedia.org/wiki/List\_of\_finite\_simple\_groups}.
\end{thebibliography}
\end{document}